 \newcommand{\RNum}[1]{\uppercase\expandafter{\romannumeral #1\relax}}
\theoremstyle{plain}
\newtheorem{theorem}{Theorem}[section]
\newtheorem{lemma}{Lemma}[section]
\newtheorem{corollary}{Corollary}[section]
\theoremstyle{definition}
\numberwithin{equation}{section}
\begin{document}

\title{\large\bf Asymptotic behavior of a three-dimensional haptotactic cross-diffusion system modeling oncolytic virotherapy}

\author{
{\rm  Yifu Wang$^{1,*}$, Chi Xu$^{1}$
}\\[0.2cm]
{\it\small \rm School of Mathematics and Statistics,  Beijing Institute of Technology}\\
{\it\small \rm Beijing 100081, P.R. China$^1$}
}
\date{}

\maketitle
\par
\date{}
\maketitle
\vspace{0.1cm}
\noindent

\begin{abstract}
This paper deals with an initial-boundary value problem for a doubly haptotactic cross-diffusion system arising from the oncolytic virotherapy
\begin{equation*}
\left\{
\begin{array}{lll}
u_t=\Delta u-\nabla \cdot(u\nabla v)+\mu u(1-u)-uz,\\
v_t=-(u+w)v,\\
w_t=\Delta w-\nabla \cdot(w\nabla v)-w+uz,\\
z_t=D_z\Delta z-z-uz+\beta w,	
\end{array}
\right.
\end{equation*}
in a smoothly bounded domain $\Omega\subset \mathbb{R}^3$ with $\beta>0$,~$\mu>0$ and $D_z>0$.  Based on a self-map argument,
 it is shown that under the assumption  $\beta \max \{1,\|u_0\|_{L^{\infty}(\Omega)}\}< 1+
(1+\frac1{\min_{x\in \Omega}u_0(x)})^{-1}$, this problem   possesses a uniquely
determined global classical solution $(u,v,w,z)$ for certain type of small data $(u_0,v_0,w_0,z_0)$. Moreover, $(u,v,w,z)$ is globally bounded and exponentially stabilizes towards its spatially homogeneous
equilibrium
 $(1,0,0,0)$ as $t\rightarrow \infty$.
\end{abstract}

\section{Introduction}
Oncolytic virus (OV) is either a natural or genetic-engineered virus which can  specifically  infect cancer cells,
enlarge the quantities through replication inside the cancer cells and eventually  lyse them, while ideally leave normal
cells unharmed\cite{FIT-CS,GPKLK-TT}.
Accordingly, comparing with the traditional treatments  towards cancer disease,
 oncolytic virus therapy has recently been recognized as a promising alternatives
and nowadays has been used in current clinical trials\cite{BKW-SC,CSFL-SC,MGHZ,RPB-NB}.
~However,~such partially success in some clinical trials still reveals the limitation in implementation. In fact, the efficacy of the so-called virotherapy will be limited by a plenty of factors, like the deposits of extracellular matrix (ECM), immune system, or the circulating
antibodies\cite{GK-CCR,NG-CR,VH-B,WLW-V}. Therefore, to better understand the underlying mechanisms that limit the efficacy of clinical treatment, Alzahrani et al.\cite{AET-MB} introduced a mathematical model to simulate the interaction between the uninfected cancer cells $u$,
infected cancer cells $w$, extracellular matrix $v$ and oncolytic virus $z$, which is formulated by
\begin{equation}\label{1.1}
\left\{\begin{array}{ll}
  u_t=D_u\Delta  u-\xi_u\nabla\cdot(u\nabla v)+\mu_u u(1-u)-\rho_u uz,\quad &x\in\Omega,~t>0\\
 v_t=- (\alpha_u u+\alpha_w w)v+\mu_v v(1-v),\quad &x\in\Omega,~t>0\\
w_t=D_w\Delta  w-\xi_w\nabla\cdot(w\nabla v)- \delta_w w+\rho_w uz,\quad &x\in\Omega,~t>0\\
z_t=D_z\Delta z-\delta_z z-\rho_z uz+\beta w,\quad &x\in\Omega,~t>0
 \end{array}\right.
\end{equation}
in a smoothly bounded domain $\Omega\subset\mathbb{R}^N$. The parameters $D_u$, $D_w,D_z$, $\xi_u$, $\xi_w,\alpha_u$, $\alpha_w,\mu_u,\delta_w,\delta_z,\beta$ are positive while $\mu_v,\rho_u,\rho_w,\rho_z$ are nonnegative.
 The underlying modeling hypothesize are that apart from its random diffusion,~both type of cancer cells will be attracted simultaneously due to some macromolecules bounded in the extracellular matrix, and that uninfected tumor cells, except perhaps proliferating logically, are converted into a state of infection in contact with
virus particles, and infected cells die due to cytolysis.~In addition,~the virus particles will adhere on the surface of cancer cells and then the newly infectious virus will be released from its inside of tumor with rate $\beta>0$;~beyond this, both of uninfected and infected cancer cells can degrade the static ECM due to the matrix degrading enzymes and the ECM is possibly  re-established according to a logistic law.

Due to the relevance with several  biological contexts, inter alia the cancer invasion or angiogenesis\cite{ACNS-JTM,AC-BMB},
 the haptotactic mechanism has received considerable attention in current
 literatures\cite{DLiu,Jin,JinTian,LL-N,LM-M3AS,LM-NA,PW3MAS,PW-M3AS,LW-JDE,SSW,TW-SIMA,WW-SIMA,ZK-JDE,ZSU-ZAMP}.
It is noted that as the striking feature thereof, two simultaneous haptotactic cross-diffusion terms distinguishes \eqref{1.1} from the most of the studied chemotaxis-haptotaxis system\cite{Jin,JinTian,LL-N,PW3MAS,TW-SIMA,ZK-JDE} and haptotaxis
system\cite{DLiu,LM-M3AS,LM-NA,PW-M3AS,WW-SIMA,ZSU-ZAMP}; apart from that, the respective mechanism in \eqref{1.1} is  potentially enhanced through
 the zero-order nonlinear production term $+\rho_w uz$  even in the two-dimensional case, which brings significant challenges to rigorous analysis
 even at the level of  elementary  solution theory.
   This reflects the situation that the comprehensive result on \eqref{1.1} seems to be rather thin, particularly in the physically relevant three-dimensional setting,
 although the qualitative information has been successfully achieved  for some simple
 variants of \eqref{1.1}\cite{C-JMAA,RenWei,LW-JDE,TW-EJAM,Taozhou,WWL}. 
For example, the issue of the global solvability of system \eqref{1.1} was addressed in\cite{TW-JDE2020} in the spatially
  two-dimensional framework,  while it is still unaddressed  in the corresponding 
 three-dimensional situation. 
     Recently,~based on the previous result of global solvability,~the $L^{\infty}$-exponential convergence property towards its equilibrium has been achieved in\cite{WX-preprint}.
On the other hand, the existing literatures in series gradually indicate that the virus production rate $\beta$ relative to the death rate of infected cancer cells is critical for the large-time behavior of the corresponding solution at least in some simple variant of \eqref{1.1} inter alia through neglecting the haptotactic movement of infected cancer cells and the renewed effect of ECM\cite{TW-NA,TW-PRSE,TW-DCDS,TW-EJAM}.  Specially, for the  cross-diffusion system  with single haptotaxis:
\begin{equation}\label{1.2}
\left\{
\begin{array}{ll}
u_t=\Delta u-\nabla \cdot(u\nabla v)+\mu u(1-u)-\rho uz,\quad &x\in\Omega,~t>0,\\
v_t=-(u+w)v,&x\in\Omega,~t>0,\\
w_t=\Delta w-w+uz,&x\in\Omega,~t>0,\\
z_t=D_z\Delta -z-uz+\beta w,	&x\in\Omega,~t>0,
\end{array}
\right.	
\end{equation}
in  a smooth bounded domain  $\Omega\subset\mathbb{R}^2$,  it is proved in\cite{TW-DCDS} that if $\beta>1$ and the initial data $\overline{u_0}>\frac{1}{\beta-1}$,~the corresponding solution to \eqref{1.2}  with $\mu=\rho=0$ will be blow-up in infinite time,  while the solution will be bounded if $\overline{u_0}<\frac{1}{(\beta-1)_{+}}$ and $v_0\equiv 0$.
However, the solution component $u$ will possess a positive lower bound when $0<\beta<1, \rho>0$\cite{TW-EJAM}. ~Futhermore,~based on the outcome of \cite{TW-EJAM},~the  asymptotic behavior of  \eqref{1.2} is established in\cite{TW-NA} if $0<\beta<1$.~It should be mentioned that in the case of $\rho>0$ and $\mu>0$, the convergence property  is also discussed in\cite{C-JMAA}.
Particularly, it is  shown in\cite{TW-PRSE} that for any prescribed level $\gamma\in(0,\frac{1}{(\beta-1)_{+}})$,~the corresponding solution of \eqref{1.2} with $\mu=0$,~$\rho\geq 0$ will  tend to the constant equilibrium $(u_{\infty},0,0,0)$ with some $u_{\infty}>0$ whenever the initial
 deviation from $(\gamma,0,0,0)$ is  suitably small,
  which provided a complement to the result of \cite{TW-DCDS}. It is observed that for two-dimensional haptotactic cross-diffusion model
  \eqref{1.2},  the result on the asymptotic behavior seems to be rather comprehensive when $0<\beta<1$,
  while it becomes rudimental  
  in the case of $\beta>1$\cite{TW-DCDS,TW-PRSE}, let alone that for the doubly haptotactic cross-diffusion system in the
  three-dimensional scenario.  Accordingly, the purpose of this work is to investigate the dynamical features involving the doubly haptotactic processes  in the
  three dimensional setting without the constraint of $0<\beta<1$.
More precisely, we shall consider a simplified version of \eqref{1.1} by neglecting the renewal of extracellular matrix,~which is given as follows				 \begin{equation}\label{1.3}
 \left\{\begin{array}{ll}
  u_t=\Delta  u-\nabla\cdot(u\nabla v)+\mu u(1-u)-uz,\quad &x\in\Omega,~t>0,\\
 v_t=- (u+w)v,&x\in\Omega,~t>0,\\
w_t=\Delta  w-\nabla\cdot(w\nabla v)- w+uz,&x\in\Omega,~t>0,\\
z_t=D_z\Delta z-z-uz+\beta w,&x\in\Omega,~t>0,\\
(\nabla u-u\nabla v)\cdot\nu=(\nabla w-w\nabla v)\cdot \nu=\nabla z\cdot\nu=0,&x\in\partial \Omega,~t>0,\\
u(x,0)=u_0,~v(x,0)=v_0,~w(x,0)=w_0,~z(x,0)=z_0,~&x\in\Omega,
 \end{array}\right.
\end{equation}
where $\Omega\subset \mathbb{R}^3$  is a bounded domain with a smooth	 boundary and $\nu$ is a outward unit normal vector of $\Omega$.~Moreover,~we henceforth assume that the initial data satisfies
\begin{equation}\label{1.4}
\left\{
\begin{array}{ll}
\displaystyle{u_0,w_0,z_0~ \hbox{and}\, v_0 ~\hbox{are  nonnegative functions from}~C^{2+\vartheta}(\bar{\Omega})~
\hbox{for some}~\vartheta\in (0,1),} \\
\displaystyle{\mbox{with}~u_0\not\equiv0,~w_0\not\equiv0,~z_0\not\equiv0,~v_0\not\equiv0~
~~\mbox{on}~~\partial\Omega.} \\
\end{array}
\right.
\end{equation}

With regard to the qualitative information of doubly haptotactic cross-diffusion systems in the
  three-dimensional setting, our  subsequence analysis reveals that even in the case of $\beta>1$,
  the uninfected cancer cells $u$ in  \eqref{1.3} is uniformly persistent for certain type of small data $(u_0,v_0,w_0,z_0)$;
in particular, whenever  $u_0(x)\leq 1$,
the  virus production rate
making possible for the  decay of uninfected cancer cells  is increasing with respect to $\displaystyle\min_{x\in \Omega}u_0(x)$, which can be stated as follows
\begin{theorem}\label{T1.1}
Let $\Omega\subset\mathbb{R}^3$ be a bounded domain with smooth boundary, $D_z>0,\mu>0$ and  $\beta \max \{1,\|u_0\|_{L^{\infty}(\Omega)}\}< 1+
(1+\frac1{\min_{x\in \Omega}u_0(x)})^{-1}$. Then  there exists $\varepsilon=\varepsilon(\beta,\mu,u_0)>0$ with the property such that whenever the initial data $(u_0,v_0,w_0,z_0)$ fulfills \eqref{1.4},
\begin{equation}\label{v1.5}
\|v_0 \|_{L^{\infty}(\Omega)}<\varepsilon,	
\end{equation}
and
\begin{equation}\label{w1.6}
\|w_0\|_{L^{\infty}(\Omega)}<\varepsilon
\end{equation}
as well as
\begin{equation}\label{z1.7}
\|z_0\|_{L^{\infty}(\Omega)}<\varepsilon,	
\end{equation}
the problem \eqref{1.3} has a unique non-negative global classical solution $(u,v,w,z)$  which is bounded in the sense that 
$$\displaystyle\sup_{t>0}\left\{\|u(\cdot, t)\|_{L^\infty(\Omega)}+\|w(\cdot, t)\|_{L^\infty(\Omega)}+\|z(\cdot, t)\|_{L^\infty(\Omega)}
+\|v(\cdot,t)\|_{W^{1,5}(\Omega)}\right\} < \infty,
$$
and moreover
\begin{equation}\label{1.8}
\|u(\cdot,t)-1\|_{L^{\infty}(\Omega)}\rightarrow 0\quad\hbox{as $t\rightarrow \infty$}	
\end{equation}
and
\begin{equation}\label{1.9}
\|v(\cdot,t)\|_{W^{1,4}(\Omega)}\rightarrow 0\quad\hbox{as $t\rightarrow \infty$}	
\end{equation}
and
\begin{equation}\label{1.10}
\|w(\cdot,t)\|_{L^{\infty}(\Omega)}\rightarrow 0\quad\hbox{as $t\rightarrow \infty$}	
\end{equation}
as well as
\begin{equation}\label{1.11}
\|z(\cdot,t)\|_{L^{\infty}(\Omega)}\rightarrow 0\quad\hbox{as $t\rightarrow \infty$}.	
\end{equation}
\end{theorem}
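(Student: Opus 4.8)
The plan is to combine a local existence-and-extensibility statement with a bootstrap (self-map) argument that propagates a package of exponentially decaying a priori bounds, and then to read off the asymptotics directly from these bounds. First I would record the maximal classical solution $(u,v,w,z)$ on a maximal interval $[0,T_{\max})$, nonnegative by the parabolic maximum principle, together with the extensibility criterion asserting that $T_{\max}<\infty$ forces $\|u(\cdot,t)\|_{L^\infty}+\|w(\cdot,t)\|_{L^\infty}+\|z(\cdot,t)\|_{L^\infty}+\|v(\cdot,t)\|_{W^{1,5}}$ to blow up as $t\nearrow T_{\max}$ (the space $W^{1,5}$ is supercritical in dimension three, which is what the two haptotaxis terms require). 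The second equation integrates explicitly: since $u,w\ge 0$,
$$v(x,t)=v_0(x)\exp\!\Big(-\!\int_0^t (u+w)(x,s)\,ds\Big),$$
so $0\le v\le v_0$ pointwise and in particular $\|v(\cdot,t)\|_{L^\infty}\le\|v_0\|_{L^\infty}<\varepsilon$ for all $t$. This shuts off any growth coming from $v$ itself; the genuine work is to control $\nabla v$, which is what the cross-diffusion actually feels.

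The core is a continuity argument. Fixing a rate $\alpha>0$ and constants $M,\eta,K_w,K_z,L$, I would set
$$T^\ast:=\sup\Big\{T\in(0,T_{\max}):\ \text{the bounds }(\mathrm B)\ \text{hold on }[0,T)\Big\},$$
where $(\mathrm B)$ collects $\eta\le u\le M$, $\|w(\cdot,t)\|_{L^\infty}\le K_w\varepsilon e^{-\alpha t}$, $\|z(\cdot,t)\|_{L^\infty}\le K_z\varepsilon e^{-\alpha t}$, and $\|\nabla v(\cdot,t)\|_{L^5}\le L$. Assuming $(\mathrm B)$ on $[0,T^\ast)$, I would re-derive each bound with a strictly smaller constant. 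The decay of $(w,z)$ is the heart of the matter: writing the $w$- and $z$-equations in Duhamel form against the Neumann semigroups $e^{t(\Delta-1)}$ and $e^{t(D_z\Delta-1)}$, the production terms are $uz$ (bounded by $MK_z\varepsilon e^{-\alpha t}$, up to the correction $-\nabla\cdot(w\nabla v)$) and $\beta w$ (bounded by $\beta K_w\varepsilon e^{-\alpha t}$). Using the smoothing $L^p$--$L^q$ estimates together with the smallness of $\|\nabla v\|_{L^5}$ to absorb the cross-diffusion contribution, this reduces to the stability of the linear cooperative system with matrix $\left(\begin{smallmatrix}-1 & M\\ \beta & -(1+\eta)\end{smallmatrix}\right)$, whose eigenvalues are negative precisely when its determinant $(1+\eta)-\beta M$ is positive, i.e. when $\beta M<1+\eta$.

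Matching this threshold to the hypothesis is the role of the persistence estimate for $u$, and the clean way to see the reciprocal structure $1+(1+1/\min_x u_0)^{-1}$ is to pass to $\phi:=1/u$. Under this substitution the dangerous sink $-uz$ becomes the benign linear term $+z\phi$, the logistic term becomes a restoring term of the form $-\mu\phi+\mu$, and diffusion produces $\Delta\phi$ minus a nonnegative gradient term; evaluating at a spatial maximum of $\phi$ then yields a differential inequality whose worst-case transient is governed by the smallness of $\int_0^t z$ guaranteed by the posited decay of $z$. Tracking the constants gives a time-uniform lower bound $u\ge\eta$ with $1/\eta=1+1/\min_x u_0(x)$, i.e. $\eta=(1+1/\min_x u_0)^{-1}$, so that $1+\eta=1+(1+1/\min_x u_0)^{-1}$ and the stability criterion $\beta M<1+\eta$ is exactly the standing assumption $\beta\max\{1,\|u_0\|_{L^\infty}\}<1+(1+1/\min_x u_0)^{-1}$; the strict inequality leaves the slack needed to absorb the haptotactic perturbations and to close the $(w,z)$ bounds with smaller constants once $\varepsilon$ is small. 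The upper bound $u\le M=\max\{1,\|u_0\|_{L^\infty}\}$ is the easier half of the same comparison. In parallel, differentiating the $v$-ODE gives a transport equation for $\nabla v$ forced by $v\,\nabla(u+w)$; feeding in parabolic regularity for $u,w$ and the exponential smallness of $w$ keeps $\|\nabla v(\cdot,t)\|_{L^5}$ below $L$ and in fact decaying, which is what upgrades both cross-diffusion terms to genuinely negligible order.

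Finally, the strict improvement of every bound in $(\mathrm B)$, combined with continuity in $t$, forces $T^\ast=T_{\max}$; the resulting uniform bounds contradict the blow-up alternative, so $T_{\max}=\infty$ and the global boundedness asserted in the theorem holds. The convergence statements then follow: $\|w(\cdot,t)\|_{L^\infty}$ and $\|z(\cdot,t)\|_{L^\infty}$ decay exponentially by $(\mathrm B)$; $\|v(\cdot,t)\|_{W^{1,4}}\to 0$ follows from the explicit formula for $v$ (which gives exponential pointwise decay once $u+w\ge\eta$) together with the decay of $\nabla v$ just obtained and the embedding $L^5\hookrightarrow L^4$ on the bounded domain; and $\|u(\cdot,t)-1\|_{L^\infty}\to 0$ follows by inserting $z\to 0$ and $\nabla v\to 0$ into the $u$-equation and sandwiching $u$ between two spatially homogeneous logistic trajectories using the established two-sided bounds. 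I expect the principal obstacle to be the circular three-dimensional coupling between the supercritical norm $\|\nabla v\|_{L^5}$ and the parabolic regularity of $u,w$ that generates it: closing this loop while simultaneously preserving the sharp algebraic margin $\beta M<1+\eta$ for the $(w,z)$ decay is exactly where the smallness of $(v_0,w_0,z_0)$ and the precise reciprocal form of the persistence constant must be spent.
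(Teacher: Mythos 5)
Your overall skeleton is genuinely the same as the paper's: a continuity (self-map) argument that presupposes exponential decay of $z$ (the paper bootstraps on the single condition $\|z(\cdot,t)\|_{L^\infty(\Omega)}<\sqrt{\varepsilon}\,e^{-\delta t}$), derives persistence $u\ge\eta$ with $\eta\approx(1+\frac{1}{\min_\Omega u_0})^{-1}$ and the upper bound $u\le M=\max\{1,\|u_0\|_{L^\infty}\}$, and closes the loop through exactly the algebraic margin $\beta M<1+\eta$ that the hypothesis provides. However, there is a genuine gap in how you execute every pointwise step: you work in the original variables, where the haptotactic term is fatal to maximum-principle and comparison arguments. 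In your $\phi=1/u$ computation, the term $-\nabla\cdot(u\nabla v)$ contributes, after substitution, $\phi\,\Delta v-\nabla\phi\cdot\nabla v$; at a spatial maximum of $\phi$ the gradient part vanishes but $\phi\,\Delta v$ survives, and $\Delta v$ admits no pointwise bound and no sign (from $v_t=-(u+w)v$ one can only express $\Delta v$ through time integrals of $\Delta(u+w)$, which is precisely the regularity you do not yet have). The same defect invalidates your final step of ``sandwiching $u$ between two spatially homogeneous logistic trajectories'': constants are not super- or sub-solutions of the $u$-equation because of $u\Delta v$. The paper's essential device, absent from your proposal, is the substitution $a=ue^{-v}$, $b=we^{-v}$, which turns the haptotactic operator into $e^{-v}\nabla\cdot(e^{v}\nabla\,\cdot\,)$ with reaction terms depending only on $(a,b,v,z)$ pointwise (no derivatives of $v$); only then do spatially homogeneous ODE solutions become legitimate comparison functions, which is how the paper proves the lower bound for $a$, the upper bounds $a\le\max\{\|u_0\|_{L^\infty},2\}$, $b\le K\sqrt{\varepsilon}e^{-\delta t}$, and the improved decay of $z$ (Lemmas 3.2--3.4).

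The second gap is the one you yourself flag as ``the principal obstacle'' but do not resolve: the circular coupling between $\|\nabla v\|_{L^5(\Omega)}$ and the regularity of $u,w$. Placing $\|\nabla v\|_{L^5}\le L$ inside the bootstrap and invoking ``parabolic regularity for $u,w$'' to re-derive it is circular, and the smallness of $\|\nabla v\|_{L^5}$ you use to absorb the cross-diffusion in the Duhamel step for $(w,z)$ is not available: the theorem assumes only $\|v_0\|_{L^\infty}<\varepsilon$, not smallness of $\nabla v_0$. The paper breaks the circle by decoupling the two levels: the full package of $L^\infty$ bounds is obtained first, purely by comparison in the transformed variables, with no gradient information whatsoever; only afterwards, to verify the extensibility criterion, does it run energy estimates --- testing with $-\Delta a$, $-\Delta b$, combining with $\int_\Omega|\nabla v|^4$ into the functional $\int_\Omega|\nabla a|^2+\int_\Omega|\nabla b|^2+\eta\int_\Omega|\nabla v|^4$, and then using $\int_0^t\int_\Omega|\Delta a|^2+|\Delta b|^2$ together with Gagliardo--Nirenberg to bound $\int_0^t\|\nabla a\|_{L^5}+\|\nabla b\|_{L^5}$ and hence $\|\nabla v(\cdot,t)\|_{L^5}$ from the explicit representation of $\nabla v$. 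Without the change of variables, neither your $L^\infty$ layer nor your gradient layer closes as written.
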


Our approach is based on a self-map argument, which presupposes a certain decay property on $z$ within an appropriate time interval,
and then proves that it is actually valid in the whole interval $[0,T_{max})$ with the help of some necessary a priori estimates,
  inter alia the lower pointwise bound of $a=ue^{-v}$, which results from
a parabolic comparison argument on the basis of an absorptive parabolic inequality fulfilled
by $a$ (see \eqref{3.8}). Indeed, due to the lower pointwise bound of $a$,
one can arrive at the exponential decay of $v$ (see Lemma \ref{lemma3.2}).
  As a consequence thereof, one can achieved a suitable upper bounds
for $b=we^{-v}$ as well as $a$ by  adequately exploiting the corresponding cooperative parabolic system for $(a,b)$ (see Lemma \ref{lemma3.3}),
which is somewhat different from that  in\cite{TW-PRSE}. Note that 
the assumption in Theorem \ref{T1.1} makes it possible to construct the desired super-solutions thereof.  Thereafter  
 we shall be able to prove that $z$ actually decays exponentially (see Lemma \ref{lemma3.4}) and thereby complete the self-map type reasoning.
 Furthermore, based on the $L^\infty(\Omega)$ of $(a,v,b,z)$ achieved in previous section, we prove that $(a,v,b,z)$ is  actually global, that is $T_{max}=\infty$. To this end, we need to estimate   $\nabla v$ with respect to the norm in $L^5(\Omega)$ according to extensibility criteria \eqref{2.2}.
  With respect to this, we turn to estimate the coupled quantity
  $$\int_{\Omega}|\nabla a|^2+\int_{\Omega}|\nabla b|^2+\eta\int_{\Omega}|\nabla v|^4$$
  with some $\eta>0$  through the testing processes (Lemma \ref{lemma4.1}). In Section 5,  \eqref{1.8}   is shown by means of the bootstrap method.
  Indeed,  as the first step toward this we show the convergence  of integral $\int^{\infty}_0\int_{\Omega}|\nabla v|^2$  (Lemma \ref{lemma5.2}).
  As the consequence thereof, we will then successively obtain the exponential decay property for $\|\nabla v(\cdot,t)\|_{L^2(\Omega)}$,
$\|u(\cdot,t)-1\|_{L^p(\Omega)}$ for any $p<6$, $\|\nabla v(\cdot,t)\|_{L^4(\Omega)}$ and $\|u(\cdot,t)-1\|_{L^\infty(\Omega)}$,
  thanks to the explicit expression  of $\nabla v$  and the  Gagliardo--Nirenberg type inequality.

\section{Preliminaries}
For the convenience of subsequent analysis, we introduce the variable transformation\cite{FFH,LW-JDE,TW-JDE,TW-SIMA}
$$
a=ue^{-v},\quad b=we^{-v},
$$
upon which system \eqref{1.3}  is converted to the following system
\begin{equation}\label{2.1}
\left\{\begin{array}{ll}
a_t=e^{-v}\nabla\cdot(e^{v}\nabla a)+f(a,b,v,c), & x\in\Omega,t>0,\\
b_t=e^{- v}\nabla\cdot(e^{v}\nabla b)+g(a,b,v,c), &  x\in\Omega,t>0,\\
v_t=- (ae^{ v}
+ be^{ v})v, &   x\in\Omega,t>0,\\
z_t=D_z \Delta z-z-uz+\beta w, &  x\in\Omega,t>0,\\
\displaystyle\frac{ \partial a}{\partial\nu}=\frac{ \partial b}{\partial\nu}=\frac{ \partial z}{\partial\nu}=0,
  &   x\in\partial\Omega,t>0,\\
a(x,0)=u_0(x) e^{-v_0(x)}, ~b(x,0)=w_0(x) e^{-v_0(x)}, &  x\in \Omega,\\
 v_0(x,0)= v_0(x),~~ z(x,0)=z_0(x), &  x\in \Omega
  \end{array}
 \right.
 \end{equation}
with
$$
f(a,b,v,c):=- az+a(a+b)e^{v}v+\mu a(1-e^v a)
$$
as well as
$$
g(a,b,v,c):=- b+ az+b(a+b)e^{v}v.
$$
Note that \eqref{1.3} and \eqref{2.1} are equivalent in the framework of the classical
solution. In this framework the following statements on local-in-time existence and a convenient extensibility
criteria of the classical solution to \eqref{2.1} can be proved by slightly adapting the arguments detailed in\cite{PW3MAS}.
\begin{lemma}\label{lemma2.1}
Let $\Omega\subset \mathbb{R}^3$ be a bounded domain with smooth boundary; $D_z$ and $\beta$ are positive, and initial data
$(u_0,v_0,w_0,z_0)$ satisfies \eqref{1.4}. Then there exist $T_{max}\in (0,\infty]$ and a  unique   quadruple
$(a,v,b,z)\in C^{2,1}(\overline{\Omega}\times (0,T_{max}))^4 \cap C^0(\overline{\Omega}\times [0,T_{max}))^4$	
which solves \eqref{2.1}  classically in $\Omega\times (0,T_{max})$, and are such that  if $T_{max} <\infty$,
then
\begin{equation}\label{2.2}
\displaystyle\limsup_{t\nearrow T_{max}}\|a(\cdot, t)\|_{L^\infty(\Omega)}+\|b(\cdot, t)\|_{L^\infty(\Omega)}+\|z(\cdot, t)\|_{L^\infty(\Omega)}
+\|\nabla v(\cdot,t)\|_{L^5(\Omega)} \rightarrow \infty.
\end{equation}
\end{lemma}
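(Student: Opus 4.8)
The plan is to recast \eqref{2.1} as a fixed-point problem in which the $v$-component is eliminated through its ODE structure and the remaining three equations are treated as (semi)linear parabolic problems with H\"older-continuous coefficients, exactly in the spirit of \cite{PW3MAS}. \textbf{Step 1 (the $v$-equation as a solved quantity).} For given nonnegative $\bar a,\bar b\in C^0(\overline{\Omega}\times[0,T])$, the third equation of \eqref{2.1} is, for each fixed $x$, the scalar ODE $v_t=-(\bar a+\bar b)e^{v}v$ with $v(\cdot,0)=v_0$; since its right-hand side is locally Lipschitz in $v$ and has the sign that forces decay, it admits a unique solution satisfying $0\le v\le\|v_0\|_{L^\infty(\Omega)}$ pointwise. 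Differentiating in space shows that $\nabla v$ obeys a linear ODE driven by $\nabla\bar a,\nabla\bar b$, which furnishes the spatial regularity of $v$ (in particular control of $\nabla v$) in terms of that of $\bar a,\bar b$ and of $v_0\in C^{2+\vartheta}(\overline\Omega)$.

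\textbf{Steps 2--3 (linear solves, contraction and regularity).} With $v$ frozen, the first two equations carry the operator $\mathcal{L}_v\varphi:=e^{-v}\nabla\cdot(e^{v}\nabla\varphi)=\Delta\varphi+\nabla v\cdot\nabla\varphi$, which under the homogeneous Neumann condition is uniformly elliptic with coefficients whose regularity is inherited from $v$, while the fourth equation has the constant-coefficient operator $D_z\Delta-1$. Treating the zero-order and coupling terms $f,g$ and $-z-ae^{v}z+\beta be^{v}$ as prescribed inhomogeneities built from $(\bar a,\bar b,\bar z)$ and $v$, standard parabolic theory (the analytic-semigroup smoothing of $\mathcal{L}_v$, or the Schauder estimates used in \cite{PW3MAS}) produces unique $a,b,z$. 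I would then define $\Phi:(\bar a,\bar b,\bar z)\mapsto(a,b,z)$ on a closed ball of $C^0(\overline\Omega\times[0,T])^3$; because $f,g$ and the $z$-reaction are smooth, hence locally Lipschitz, in all arguments and $e^{v}$ is controlled by Step 1, one checks that $\Phi$ maps this ball into itself and is a contraction once $T$ is small enough, depending only on $\|u_0\|_{C^{2+\vartheta}}$ and companion data. Banach's theorem yields a unique fixed point; parabolic Schauder regularity bootstraps it to the asserted class $C^{2,1}(\overline\Omega\times(0,T_{max}))^4\cap C^0(\overline\Omega\times[0,T_{max}))^4$, and uniqueness in the full solution class follows from a Gr\"onwall estimate on differences. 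Maximality of $T_{max}$ comes from the usual patching of local solutions.

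\textbf{Step 4 (extensibility).} For the criterion \eqref{2.2} I would argue by contradiction: if $T_{max}<\infty$ while $\|a(\cdot,t)\|_{L^\infty(\Omega)}+\|b(\cdot,t)\|_{L^\infty(\Omega)}+\|z(\cdot,t)\|_{L^\infty(\Omega)}+\|\nabla v(\cdot,t)\|_{L^5(\Omega)}$ stayed bounded on $[0,T_{max})$, then the length of the existence interval furnished by Steps 1--3, when started from an initial time $t_0<T_{max}$, could be bounded below uniformly in $t_0$. This would allow the solution to be continued beyond $T_{max}$, contradicting maximality; hence the $\limsup$ in \eqref{2.2} must be infinite.

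\textbf{Main obstacle.} The delicate point is that the drift $\nabla v\cdot\nabla a$ in $\mathcal{L}_v$ couples the regularity of the parabolic solves to that of $\nabla v$, while $\nabla v$ is itself driven by $\nabla\bar a,\nabla\bar b$ through Step 1, so the two feed back on one another. Closing the scheme with an existence time governed by \emph{only} the four quantities in \eqref{2.2} forces, in the three-dimensional setting, an $L^p$ control of $\nabla v$ with $p>N=3$: the choice $p=5$ places $\nabla v$ comfortably in the subcritical range, so that the $L^p$--$L^q$ smoothing estimates for $\mathcal{L}_v$ (equivalently, the embedding $W^{1,5}(\Omega)\hookrightarrow C^{0,\alpha}(\overline\Omega)$) absorb both the first-order term and the reaction nonlinearities with a time step depending precisely on these norms. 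Verifying that $L^5$-boundedness of $\nabla v$ genuinely suffices to re-run the fixed point uniformly, rather than relying on the naive pointwise gradient bound one reads off from the ODE, is the essential adaptation of \cite{PW3MAS} required here, and is exactly what makes $\|\nabla v\|_{L^5(\Omega)}$ the natural fourth ingredient of the blow-up criterion.
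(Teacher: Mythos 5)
The paper itself does not write out a proof of Lemma \ref{lemma2.1}: it merely remarks that the statement ``can be proved by slightly adapting the arguments detailed in \cite{PW3MAS}''. Your overall architecture --- solve the $v$-equation as a pointwise ODE with frozen $(\bar a,\bar b)$, perform linear parabolic solves, close a fixed point, bootstrap regularity, and obtain \eqref{2.2} by continuation --- is exactly the architecture of that reference, so in spirit you are reconstructing the intended proof.

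There is, however, an internal inconsistency in your Steps 1--3 that makes the scheme fail as written. You place the fixed-point ball in $C^0(\overline\Omega\times[0,T])^3$, but your Step 1 produces $\nabla v$ by ``differentiating in space'' the ODE, which requires $\nabla\bar a$ and $\nabla\bar b$ to exist --- information a $C^0$ ball does not carry. Consequently the drift operator $\mathcal{L}_v=\Delta+\nabla v\cdot\nabla$ (equivalently, the H\"older regularity of the coefficient $e^{v}$ needed to apply Schauder or semigroup theory to the divergence form $e^{-v}\nabla\cdot(e^{v}\nabla\cdot)$) is not even defined for a generic element of your ball, so the map $\Phi$ you propose to contract does not exist on its stated domain. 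This derivative loss is precisely what the cited arguments are structured to avoid: the fixed-point space must control one spatial derivative (for instance a ball in $C^0([0,T];C^1(\overline\Omega))$, or H\"older spaces $C^{\alpha,\alpha/2}$ combined with Schauder's fixed point theorem and compactness of $C^{1+\alpha,(1+\alpha)/2}\hookrightarrow C^{\alpha,\alpha/2}$), and the lost derivative is then recovered in the parabolic solves through smoothing estimates such as $\|\nabla e^{\sigma\Delta}\varphi\|_{L^\infty(\Omega)}\le C\sigma^{-1/2}\|\varphi\|_{L^\infty(\Omega)}$, which also supply the small factor in $T$ making $\Phi$ a self-map and a contraction. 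The same gap propagates into Step 4: the existence time furnished by Steps 1--3 depends on the stronger (gradient or H\"older) norm of the restart datum, not only on the four quantities in \eqref{2.2}; hence the uniform lower bound on the continuation step requires first converting bounds on $\|a\|_{L^\infty}$, $\|b\|_{L^\infty}$, $\|z\|_{L^\infty}$, $\|\nabla v\|_{L^5}$ into bounds on those stronger norms via interior parabolic regularity. Your ``main obstacle'' paragraph names the right ingredient ($W^{1,5}(\Omega)\hookrightarrow C^{0,\alpha}(\overline\Omega)$ since $5>3$), but this bootstrap is a necessary step of the proof of \eqref{2.2}, not an optional refinement, and it must be carried out in whatever function space you ultimately use for the fixed point.
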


\section{ A priori estimates for solutions }
In order to set the frame for the self-map type argument under the assumption of $\beta$ in Theorem \ref{T1.1}, it seems necessary for us
 to formulate the following elementary observation.
 \begin{lemma}\label{lemma3.1} Let
 $\beta \max \{1,\|u_0\|_{L^{\infty}(\Omega)}\}< 1+
(1+\frac1{\min_{x\in \Omega}u_0(x)})^{-1}$. Then there exist $K=K(\beta,u_0)>0$, $\delta\in (0,1)$ and $\varepsilon_0\in (0,1)$ such that
for all $0<\varepsilon<\varepsilon_0$, we have
\begin{equation}\label{3.1}
\begin{array}{l}
\displaystyle\frac1{1-\delta}(\max \{\frac{\mu+\varepsilon e}{\mu-\varepsilon e},\|u_0\|_{L^{\infty}(\Omega)}\}+ \sqrt{\varepsilon}e
\max \{3,\|u_0\|_{L^{\infty}(\Omega)}+1\}
)\\[2mm]
<K<\displaystyle\frac{1}{\beta e^\varepsilon}  (1+
(\frac{\mu e^\varepsilon}{\mu-\sqrt{\varepsilon}}+\frac{e^{\frac {\sqrt{\varepsilon}}{\delta}+\varepsilon}}
{\min_{x\in \Omega}u_0(x)})^{-1}-\delta)(1-\frac 32 \sqrt{\varepsilon}).
\end{array}
\end{equation}
  \end{lemma}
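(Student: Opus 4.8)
The plan is to treat \eqref{3.1} as a continuity statement: both sides are explicit elementary functions of $\varepsilon$ and $\delta$, and in the joint limit $\varepsilon\to 0$, $\delta\to 0$ they reduce precisely to the two sides of the standing hypothesis. I would denote the left-hand side of \eqref{3.1} by $L(\varepsilon,\delta)$ and the right-hand side by $R(\varepsilon,\delta)$, and observe first that the assumption $\beta\max\{1,\|u_0\|_{L^\infty(\Omega)}\}<1+(1+\frac1{\min_{x\in\Omega}u_0(x)})^{-1}$ is, after dividing by $\beta$, exactly the strict inequality
$$
\max\{1,\|u_0\|_{L^\infty(\Omega)}\}
<\tfrac1\beta\Big(1+\big(1+\tfrac1{\min_{x\in\Omega}u_0(x)}\big)^{-1}\Big),
$$
which is the relation $L(0,0)<R(0,0)$ once one checks that $L(\varepsilon,\delta)\to\max\{1,\|u_0\|_{L^\infty(\Omega)}\}$ and $R(\varepsilon,\delta)\to\frac1\beta(1+(1+\frac1{\min_{x\in\Omega}u_0(x)})^{-1})$ as $(\varepsilon,\delta)\to(0,0)$.

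Next I would fix $\delta$ before $\varepsilon$. Writing
$$
L_0(\delta):=\tfrac1{1-\delta}\max\{1,\|u_0\|_{L^\infty(\Omega)}\},
\qquad
R_0(\delta):=\tfrac1\beta\Big(1+\big(1+\tfrac1{\min_{x\in\Omega}u_0(x)}\big)^{-1}-\delta\Big)
$$
for the $\varepsilon\to 0$ limits of $L$ and $R$, I note that $L_0(\delta)\to L_0(0)$ and $R_0(\delta)\to R_0(0)$ as $\delta\to 0^+$, with $L_0(0)<R_0(0)$ by the hypothesis; hence by continuity there is $\delta\in(0,1)$ with $L_0(\delta)<R_0(\delta)$. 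I would fix such a $\delta$ (depending only on $\beta$ and $u_0$) and set $K$ to be the midpoint $K:=\frac12(L_0(\delta)+R_0(\delta))$, so that $L_0(\delta)<K<R_0(\delta)$ and $K=K(\beta,u_0)>0$, the positivity following from $K>L_0(\delta)\ge\frac1{1-\delta}>0$.

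Finally, with $\delta$ and $K$ now frozen, I would invoke continuity in $\varepsilon$. Since $L(\cdot,\delta)$ and $R(\cdot,\delta)$ are continuous at $\varepsilon=0^+$ with $L(\varepsilon,\delta)\to L_0(\delta)<K$ and $R(\varepsilon,\delta)\to R_0(\delta)>K$, there exists $\varepsilon_0\in(0,1)$---which I would take small enough to also guarantee $\mu-\varepsilon e>0$ and $\mu-\sqrt\varepsilon>0$, so that all denominators in \eqref{3.1} remain positive---such that $L(\varepsilon,\delta)<K<R(\varepsilon,\delta)$ for every $0<\varepsilon<\varepsilon_0$, which is exactly \eqref{3.1}. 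The only point demanding care is the order of the quantifiers: because $K$ must be chosen before $\varepsilon$ and independently of it, one cannot simply place $K$ in the $\varepsilon$-dependent gap $(L(\varepsilon,\delta),R(\varepsilon,\delta))$; instead one anchors $K$ strictly between the limiting values $L_0(\delta)$ and $R_0(\delta)$, and then shrinks $\varepsilon_0$ so that the $\varepsilon$-dependent gap still straddles $K$. Beyond this the argument is the routine verification that the factors $e^{\varepsilon}$, $\frac{\mu+\varepsilon e}{\mu-\varepsilon e}$, $\frac{\mu e^\varepsilon}{\mu-\sqrt\varepsilon}$, $e^{\sqrt\varepsilon/\delta+\varepsilon}$ and $1-\frac32\sqrt\varepsilon$ all tend to $1$ while the additive term $\sqrt\varepsilon\,e\max\{3,\|u_0\|_{L^\infty(\Omega)}+1\}$ vanishes as $\varepsilon\to0^+$, so no genuine obstacle arises and the lemma is indeed the announced elementary observation.
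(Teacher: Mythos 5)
Your proof is correct and takes essentially the same approach as the paper: the paper likewise sets $K$ to be the midpoint of $\max\{1,\|u_0\|_{L^{\infty}(\Omega)}\}$ and $\frac1\beta\bigl(1+(1+\frac1{\min_{x\in\Omega}u_0(x)})^{-1}\bigr)$ and then declares that \eqref{3.1} ``follows by means of an argument based on continuous dependence'' for small $\delta$ and $\varepsilon$. Your write-up simply makes that continuity step explicit and quantifier-safe (anchoring $K$ between the $\delta$-dependent limits $L_0(\delta)$ and $R_0(\delta)$ rather than the $(\varepsilon,\delta)=(0,0)$ limits), which is an inessential variation.
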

    \begin{proof}
By the hypothesis
 $\beta \max \{\|u_0\|_{L^{\infty}(\Omega)},1 \}< 1+
(1+\frac1{\min_{x\in \Omega}u_0(x)})^{-1}$, one can see  that
$$ K=\frac12\left(\max \{1,\|u_0\|_{L^{\infty}(\Omega)}\}+\frac  { 1+
(1+\frac1{\min_{x\in \Omega}u_0(x)})^{-1}  }\beta \right)
$$
satisfies
\begin{equation}\label{3.2}
\max \{1,\|u_0\|_{L^{\infty}(\Omega)}\}<K<\frac  { 1+
(1+\frac1{\min_{x\in \Omega}u_0(x)})^{-1}  }\beta.
\end{equation}
Therefore, the claim follows by means of an argument based on
continuous dependence.
  \end{proof}

 Now  we further take $\varepsilon\in (0,1)$  small enough such that
\begin{equation}\label{3.3}
 \varepsilon<\varepsilon_1:=\min\{ \frac{\beta^2} 4, \frac{\mu}{3e}\}
\end{equation}
  and  henceforth suppose that  the initial data $(u_0,v_0,w_0,z_0)$
fulfills \eqref{1.4}--\eqref{z1.7}, and then consider  the set
\begin{equation}\label{3.4}
\mathcal{S}=\left\{T_1>0\left|\right.~\|z(\cdot,t)\|_{L^{\infty}(\Omega)}<\sqrt{\varepsilon} e^{-\delta t}~~~\hbox{for all $t\in (0,T_1\cap T_{max})$}\right\}
\end{equation}
which is not empty due to \eqref{z1.7}. In particular,  $T=\sup \mathcal{S}\in(0, T_{max}]$,  the maximal  interval on  which
$\|z(\cdot,t)\|_{L^{\infty}(\Omega)}< \sqrt{\varepsilon} e^{-\delta t}$ holds,  is well-defined.

As the first step of the proof of Theorem \ref{T1.1}, we make sure that if $T_{max}<\infty$, then
\begin{equation}\label{3.5}
T=T_{max}.
\end{equation}
To this end, we suppose that $T<T_{max}$ and then derive the pointwise lower-bounded estimate of $a$ on interval $(0,T)$ from the hypothesis included in \eqref{3.4} by a simple comparison argument.
\begin{lemma}\label{lemma3.2}
Let the assumptions in Theorem  \ref{T1.1} hold and  the initial data $(u_0,v_0,w_0,z_0)$
fulfills \eqref{1.4}-- \eqref{z1.7} with  $\varepsilon<\min\{\varepsilon_0,\varepsilon_1\}$.  Then we have
\begin{equation}\label{3.6}
a(x,t)\geq \left\{
\frac{
e^{\frac{\sqrt{\varepsilon}}{\delta}+\epsilon}}{\min_{x\in \Omega}u_0(x)}+\frac{\mu e^{\varepsilon}}{\mu-\sqrt{\varepsilon}}
 \right\}^{-1} \quad \quad \hbox{for all} ~~x\in\Omega, t\in(0,T)	
\end{equation}
as well as
\begin{equation}\label{3.7}
v(x,t)\leq  \varepsilon
\exp\{-(
\frac{
e^{\frac{\sqrt{\varepsilon}}{\delta}+\epsilon}}{\min_{x\in \Omega}u_0(x)}+\frac{\mu e^{\varepsilon}}{\mu-\sqrt{\varepsilon}})^{-1} t\}  \quad \hbox{for all} ~~x\in\Omega, t\in(0,T).	
\end{equation}
\end{lemma}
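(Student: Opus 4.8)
\emph{Proof plan.} The plan is to deduce the lower bound \eqref{3.6} from a pointwise parabolic comparison carried out on $a$ alone, and then to read off the decay \eqref{3.7} directly from the third equation of \eqref{2.1}. The starting observation is the elementary monotonicity of $v$: since $a,b,v$ are nonnegative, $v_t=-(a+b)e^{v}v\le 0$, so $v$ is nonincreasing in time and hence $0\le v(x,t)\le v_0(x)\le\|v_0\|_{L^\infty(\Omega)}<\varepsilon$ throughout $\Omega\times(0,T)$ by \eqref{v1.5}. This uniform smallness of $v$ is exactly what turns the nonlinear coupling into the explicit constants appearing in \eqref{3.6}.

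Next I would derive an absorptive differential inequality for $a$. On $(0,T)$ the defining property of $T$ through the set $\mathcal{S}$ in \eqref{3.4} gives $\|z(\cdot,t)\|_{L^\infty(\Omega)}<\sqrt{\varepsilon}\,e^{-\delta t}$, so $z\ge 0$ and $a\ge 0$ force $-az\ge-\sqrt{\varepsilon}\,e^{-\delta t}a$; the production term $a(a+b)e^{v}v$ is nonnegative and may be discarded, while $e^{v}\le e^{\varepsilon}$ turns the logistic contribution into $\mu a(1-e^{v}a)\ge \mu a-\mu e^{\varepsilon}a^{2}$. Collecting these in the $a$-equation of \eqref{2.1} produces
\[
a_t\ \ge\ e^{-v}\nabla\cdot(e^{v}\nabla a)+(\mu-\sqrt{\varepsilon}\,e^{-\delta t})\,a-\mu e^{\varepsilon}a^{2}\qquad\text{in }\Omega\times(0,T),
\]
together with $\partial a/\partial\nu=0$. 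I would then compare $a$ with the spatially homogeneous subsolution $\underline{a}(t)$ solving the Bernoulli problem $\underline{a}'=(\mu-\sqrt{\varepsilon}\,e^{-\delta t})\underline{a}-\mu e^{\varepsilon}\underline{a}^{2}$ with $\underline{a}(0)=\min_{\overline\Omega}(u_0e^{-v_0})$: being constant in $x$, $\underline{a}$ annihilates the weighted diffusion term and satisfies the Neumann condition, so the parabolic comparison principle for $\partial_t-e^{-v}\nabla\cdot(e^{v}\nabla\,\cdot\,)$, whose coefficients are smooth on $\overline{\Omega}\times[0,T']$ for each $T'<T$ by Lemma \ref{lemma2.1}, yields $a(x,t)\ge\underline{a}(t)$.

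It then remains to estimate $\underline a$ from below, which I would do by passing to $\phi:=1/\underline a$, satisfying the \emph{linear} equation $\phi'+(\mu-\sqrt{\varepsilon}\,e^{-\delta t})\phi=\mu e^{\varepsilon}$ with $\phi(0)\le e^{\varepsilon}/\min_{\overline\Omega}u_0$. With the integrating factor $\rho(t)=\exp\{\mu t-\tfrac{\sqrt{\varepsilon}}{\delta}(1-e^{-\delta t})\}$ the representation $\phi(t)=\rho(t)^{-1}\phi(0)+\mu e^{\varepsilon}\rho(t)^{-1}\int_0^t\rho(s)\,ds$ is bounded by treating the two pieces asymmetrically: for the transient one uses $\rho(t)^{-1}\le e^{\sqrt{\varepsilon}/\delta}$ (discarding $e^{-\mu t}\le 1$ and $1-e^{-\delta t}\le 1$), whereas for the forcing one uses $\rho(s)/\rho(t)\le e^{-(\mu-\sqrt{\varepsilon})(t-s)}$, which comes from $\mu-\sqrt{\varepsilon}\,e^{-\delta r}\ge\mu-\sqrt{\varepsilon}$. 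This gives $\phi(t)\le \frac{e^{\sqrt{\varepsilon}/\delta+\varepsilon}}{\min_{\overline\Omega}u_0}+\frac{\mu e^{\varepsilon}}{\mu-\sqrt{\varepsilon}}$, and inverting yields precisely \eqref{3.6}; here $\mu>\sqrt{\varepsilon}$ is ensured by the standing assumption $\varepsilon<\varepsilon_0$. Finally, writing $m$ for the constant on the right of \eqref{3.6}, the third equation of \eqref{2.1} gives $v_t=-(a+b)e^{v}v\le -m v$ (using $e^{v}\ge 1$, $b\ge0$, $v\ge0$ and $a\ge m$), and a one-line Gronwall argument delivers $v(x,t)\le v_0(x)e^{-mt}\le\varepsilon e^{-mt}$, which is \eqref{3.7}.

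I expect the only genuinely delicate point to be the precise matching of the constant in \eqref{3.6}: the time-dependent coefficient $\mu-\sqrt{\varepsilon}\,e^{-\delta t}$ must be estimated from above (by $\mu$, producing the factor $e^{\sqrt{\varepsilon}/\delta}$ in the initial-data term) and simultaneously from below (by $\mu-\sqrt{\varepsilon}$, producing the steady-state term $\frac{\mu e^{\varepsilon}}{\mu-\sqrt{\varepsilon}}$) within the same integrating-factor computation. By contrast, the comparison principle for the weighted divergence-form operator with the time-dependent weight $e^{v}$ is routine, since a spatially constant competitor reduces the whole argument to a scalar ODE comparison up to $t=0$, for which the continuity of $(a,v,b,z)$ asserted in Lemma \ref{lemma2.1} suffices.
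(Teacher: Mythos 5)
Your proposal is correct and follows essentially the same route as the paper: the same absorptive parabolic inequality for $a$ obtained by discarding the haptotactic production term and using $z<\sqrt{\varepsilon}e^{-\delta t}$, $v\le\varepsilon$, comparison with the same spatially homogeneous Bernoulli subsolution $\underline{a}(t)$, and the same deduction $v_t\le -a_* v$ for \eqref{3.7}. The only difference is that you explicitly solve the Bernoulli ODE via $\phi=1/\underline{a}$ and the integrating factor, where the paper simply invokes ``the explicit solution thereof''; your computation correctly reproduces the constant in \eqref{3.6}.
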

\begin{proof}
From the definition of $T$ and $v(x,t)\leq v_0(x)\leq \varepsilon$, one can see that for all $(x,t)\in \Omega\times(0,T)$,
\begin{equation}\label{3.8}
\begin{array}{ll}
a_t&\geq e^{-v}\nabla \cdot (e^{v}\nabla a)+(\mu-\sqrt{\varepsilon} e^{-\delta t})a-\mu e^{\varepsilon}a^2 \\[2mm]
 &= \triangle v - \nabla v \cdot \nabla a+(\mu-\sqrt{\varepsilon} e^{-\delta t})a-\mu e^{\varepsilon}a^2.
\end{array}
\end{equation}
Thus, if we let $\underline{a}(t)\in C^1((0,T)) $ denote
the solution of
\begin{equation}\label{3.9}
\left\{
\begin{array}{lll}
\underline{a}_t=(\mu-\sqrt{\varepsilon} e^{-\delta t})\underline{a}-\mu e^{\varepsilon}\underline{a}^2,\quad & t\in (0,T),\\
\underline{a}(0)=a_{0*}:=\min_{x\in \Omega} a_0(x), 	
\end{array}
\right.
\end{equation}
then thanks to explicit solution of the Bernoulli-type initial-value problem (IVP) \eqref{3.9},  the comparison principle asserts   that
\begin{equation}\label{3.10a}
a(x,t)\geq \underline{a}(t)\geq \left\{
\frac{
e^{\frac{\sqrt{\varepsilon}}{\delta}+\epsilon}}{\min_{x\in \Omega}u_0(x)}+\frac{\mu e^{\varepsilon}}{\mu-\sqrt{\varepsilon}}
 \right\}^{-1}
\hbox{for all}~~ (x,t)\in\overline{\Omega}\times[0,T).	
\end{equation}
On the other hand, by the nonnegativity  of functions $b$ and $v$, we have
$
v_t\leq -av
$
which along with \eqref{3.10a} yields \eqref{3.7} immediately.
\end{proof}

Based on above estimates,  one can obtain suitable upper bounds
for $b$ as well as for $a$ by  adequately exploiting
a cooperative parabolic system.

\begin{lemma}\label{lemma3.3} Suppose that  the assumptions in Lemma  \ref{lemma3.2} hold, then
$$
a(x,t)\leq \max\{\|u_0\|_{L^\infty(\Omega)},2\},~~b(x,t)\leq K\sqrt{\varepsilon} e^{-\delta t}
$$
where $K>0,\varepsilon>0$ and $\delta>0 $ are  given in Lemma \ref{lemma3.1}.
\end{lemma}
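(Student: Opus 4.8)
The plan is to exploit the fact that, thanks to the signs of the coupling terms, the pair $(a,b)$ solves a \emph{cooperative} (quasimonotone nondecreasing) parabolic system, so that the two bounds can be obtained \emph{simultaneously} by a comparison argument against an explicit supersolution pair. Indeed, with $f,g$ as in \eqref{2.1} one has $\partial f/\partial b=a e^{v}v\geq0$ and $\partial g/\partial a=z+b e^{v}v\geq0$ on $\Omega\times(0,T)$, since $a,b,v,z\geq0$ there. This cooperativity is essential: the desired upper bound on $a$ requires control of the cross term $a(a+b)e^{v}v$ and hence an upper bound for $b$, whereas the bound on $b$ requires control of the production term $az$ and hence an upper bound for $a$. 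A purely sequential scalar comparison would therefore be circular, and it is precisely the quasimonotone structure that lets both inequalities be closed at once.

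Accordingly, I would take the spatially homogeneous supersolution pair
\[
\bar a\equiv \max\{\|u_0\|_{L^\infty(\Omega)},2\},\qquad \bar b(t):=K\sqrt{\varepsilon}\,e^{-\delta t},
\]
with $K,\delta,\varepsilon$ as furnished by Lemma \ref{lemma3.1} together with \eqref{3.3}, and verify that $(\bar a,\bar b)$ is a supersolution of \eqref{2.1} on $\Omega\times(0,T)$. Since $\bar a,\bar b$ are independent of $x$, the operator $e^{-v}\nabla\cdot(e^{v}\nabla\,\cdot)$ annihilates them and the homogeneous Neumann condition holds with equality, so the two required differential inequalities reduce to the pointwise scalar inequalities $0\geq f(\bar a,\bar b,v,z)$ and $\bar b_t\geq g(\bar a,\bar b,v,z)$, to be checked using the a priori information already available on $(0,T)$: the smallness $v\leq\varepsilon$ (indeed $v$ decays exponentially by \eqref{3.7}) and the defining bound $\|z(\cdot,t)\|_{L^\infty(\Omega)}<\sqrt{\varepsilon}\,e^{-\delta t}$ from \eqref{3.4}.

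For the $a$-inequality I would discard the favorable term $-\bar a z\leq0$ and note that, because $\bar a\geq2$ and $e^{v}\geq1$, one has $e^{v}\bar a\geq2$, so the logistic absorption forces $\mu\bar a(1-e^{v}\bar a)\leq -\mu(\bar a-1)\bar a$; the remaining cross term obeys $\bar a(\bar a+\bar b)e^{v}v\leq\bar a(\bar a+\bar b)e^{\varepsilon}\varepsilon$, which is dominated by $\mu(\bar a-1)\bar a$ once $\varepsilon$ is small. This is exactly where \eqref{3.3}, i.e. $\varepsilon<\mu/(3e)$, enters: it keeps the threshold carrying-capacity level $\tfrac{\mu+\varepsilon e}{\mu-\varepsilon e}$ strictly below $2$, so that the cap $\max\{\|u_0\|_{L^\infty(\Omega)},2\}$ is admissible. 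For the $b$-inequality, using $z\leq\sqrt{\varepsilon}\,e^{-\delta t}$ and $\bar a\leq\max\{\|u_0\|_{L^\infty(\Omega)},2\}$ to estimate the production $\bar a z$ and the cross term $\bar b(\bar a+\bar b)e^{v}v$, and dividing through by $\sqrt{\varepsilon}\,e^{-\delta t}$, the inequality reduces to
\[
K(1-\delta)\geq \bar a+K(\bar a+K\sqrt{\varepsilon})e^{\varepsilon}\varepsilon,
\]
which is guaranteed by the lower bound on $K$ in \eqref{3.1}, the higher-order last term being absorbed into the slack $\sqrt{\varepsilon}\,e\max\{3,\|u_0\|_{L^\infty(\Omega)}+1\}$ present there.

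Finally, the initial ordering holds since $a(\cdot,0)=u_0 e^{-v_0}\leq\|u_0\|_{L^\infty(\Omega)}\leq\bar a$ and $b(\cdot,0)=w_0 e^{-v_0}\leq\|w_0\|_{L^\infty(\Omega)}<\varepsilon\leq K\sqrt{\varepsilon}=\bar b(0)$, the last step using $K>1>\sqrt{\varepsilon}$ as forced by \eqref{3.1}. The comparison principle for cooperative parabolic systems then yields $a\leq\bar a$ and $b\leq\bar b$ on $\Omega\times(0,T)$, which is the assertion. I expect the main obstacle to be exactly the book-keeping that renders the $b$-supersolution inequality compatible with the prescribed lower bound on $K$: the production term $az$ injects mass into $b$ at the rate $\sqrt{\varepsilon}\,e^{-\delta t}$ dictated by the class $\mathcal{S}$, and closing $K(1-\delta)\geq\bar a+(\text{small})$ is the quantitative heart of the matter, with the cooperativity being the structural input that legitimizes treating the two coupled bounds simultaneously rather than circularly.
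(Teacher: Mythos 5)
Your structural setup coincides with the paper's: reduce to a cooperative system for $(a,b)$ using $z<\sqrt{\varepsilon}\,e^{-\delta t}$ and $v\le\varepsilon$, then compare against a supersolution pair whose second component is $K\sqrt{\varepsilon}\,e^{-\delta t}$. The gap lies in your choice $\bar a\equiv\max\{\|u_0\|_{L^\infty(\Omega)},2\}$ and in the claim that the resulting requirement
\[
K(1-\delta)\ \ge\ \bar a+K\bigl(\bar a+K\sqrt{\varepsilon}\bigr)e^{\varepsilon}\varepsilon
\]
``is guaranteed by the lower bound on $K$ in \eqref{3.1}''. It is not: \eqref{3.1} only provides $K(1-\delta)>\max\{\tfrac{\mu+\varepsilon e}{\mu-\varepsilon e},\|u_0\|_{L^\infty(\Omega)}\}+\sqrt{\varepsilon}\,e\max\{3,\|u_0\|_{L^\infty(\Omega)}+1\}$, and the slack term $\sqrt{\varepsilon}\,e\max\{3,\|u_0\|_{L^\infty(\Omega)}+1\}$ is already fully consumed by your quadratic term $K(\bar a+K\sqrt{\varepsilon})e^{\varepsilon}\varepsilon$ (via $K\sqrt{\varepsilon}<1$), leaving nothing to cover the discrepancy between $\bar a=\max\{2,\|u_0\|_{L^\infty(\Omega)}\}$ and $\max\{\tfrac{\mu+\varepsilon e}{\mu-\varepsilon e},\|u_0\|_{L^\infty(\Omega)}\}$. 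When $\|u_0\|_{L^\infty(\Omega)}<2$ this discrepancy is of order one (it approaches $2-\tfrac{\mu+\varepsilon e}{\mu-\varepsilon e}\to1$ as $\varepsilon\to0$), not small. Nor can you repair this by enlarging $K$: the upper bound in \eqref{3.1}, which is indispensable in Lemma \ref{lemma3.4} to close the self-map, caps $K$ essentially at $\tfrac1\beta\bigl(1+(1+\tfrac1{\min_{\Omega}u_0})^{-1}\bigr)$. Concretely, for $u_0\equiv1$ and $\beta=1.4$ the hypothesis of Theorem \ref{T1.1} holds ($1.4<\tfrac32$), yet every admissible $K$ satisfies $K<1.1$ for small $\varepsilon$, while your $b$-inequality demands $K\ge 2/(1-\delta)$; since $z$ may come arbitrarily close to $\sqrt{\varepsilon}\,e^{-\delta t}$ on $(0,T)$, the supersolution inequality for $\bar b$ cannot be verified and indeed fails in the worst admissible case. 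Because cooperativity ties the two bounds together, the $a$-bound collapses along with it.

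This is precisely why the paper does not take a constant first component: it sets $\hat a(x,t)=\varphi(t)$ with $\varphi$ the solution of the Bernoulli problem \eqref{3.13}, which by \eqref{3.14} satisfies $\varphi(t)\le\max\{\|u_0\|_{L^\infty(\Omega)},\tfrac{\mu+\varepsilon e}{\mu-\varepsilon e}\}$; it is this smaller quantity --- the one actually built into the lower bound of \eqref{3.1} --- that multiplies the production term $\sqrt{\varepsilon}\,e^{-\delta t}$ feeding the $b$-equation, and then \eqref{3.15} closes the verification. A constant-in-time supersolution could be salvaged, but only at the sharper level $\max\{\|u_0\|_{L^\infty(\Omega)},\tfrac{\mu+\varepsilon e}{\mu-\varepsilon e}\}$ (one checks that $\mu(\bar a-1)\ge\varepsilon e^{\varepsilon}(\bar a+\bar b)$ still holds there), not at level $\max\{\|u_0\|_{L^\infty(\Omega)},2\}$: the number $2$ in the statement of the lemma is merely a convenient upper bound for $\tfrac{\mu+\varepsilon e}{\mu-\varepsilon e}$ under \eqref{3.3}, not an admissible supersolution level.
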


\begin{proof} Let $a_*= \{
\frac{
e^{\frac{\sqrt{\varepsilon}}{\delta}+\epsilon}}{\min_{x\in \Omega}u_0(x)}+\frac{\mu e^{\varepsilon}}{\mu-\sqrt{\varepsilon}}
\}^{-1}$.  Then due to the positivity of $a$, $b$ and $z$, it follows from \eqref{3.4}, \eqref{3.7} that
 \begin{equation}\label{3.10}
\left\{
\begin{array}{lll}
a_t\leq e^{-v}\nabla \cdot(e^{v}\nabla a)+\varepsilon e^{\varepsilon}a(a+b) e^{-a_*t}+\mu a-\mu a^2,\quad & x\in\Omega,t\in (0,T),\\[2mm]
b_t\leq e^{-v}\nabla \cdot(e^{v}\nabla b)-b+\varepsilon e^{\varepsilon}b(a+b) e^{-a_*t}+
\sqrt{\varepsilon} a e^{-\delta t}, &x\in\Omega,t\in (0,T),
\end{array}
\right.	
\end{equation}
 and thereby  $(a,b)$ is a sub-solution of the cooperative  parabolic system
 \begin{equation}\label{3.11}
\left\{
\begin{array}{lll}
\tilde{a}_t= e^{-v}\nabla \cdot(e^{v}\nabla \tilde{a})+\varepsilon e^{\varepsilon}\tilde{a}(\tilde{a}+\tilde{b}) e^{-a_*t}+\mu\tilde{a}-\mu \tilde{a}^2,\quad & x\in\Omega,t\in (0,T),\\[2mm]
\tilde{b}_t= e^{-v}\nabla \cdot(e^{v}\nabla \tilde{b})-\tilde{b}+\varepsilon e^{\varepsilon}\tilde{b}(\tilde{a}+\tilde{b}) e^{-a_*t}+
\sqrt{\varepsilon} \tilde{a}e^{-\delta t}, &x\in\Omega,t\in (0,T),\\[2mm]
\tilde{a}(x,0)=u_0(x)e^{-v_0(x)},~~\tilde{b}(x,0)=w_0(x)e^{-v_0(x)}.
\end{array}
\right.	
\end{equation}
 In order to construct a appropriate super-solution $(\hat{a},\hat{b})$ to problem \eqref{3.11}, we let
 \begin{equation}\label{3.12}
 \hat{a}(x,t)=\varphi(t), ~~\hat{b}(x,t)=K\varepsilon^{\frac 12} e^{-\delta t} ~~~\hbox{for}~~ x\in \Omega, t\in (0,T),
\end{equation}
 where  $K$ is given in Lemma \ref{lemma3.1}, and $\varphi\in C^1([0,T))$ is the solution of  the Bernoulli-type IBV
 \begin{equation}\label{3.13}
 \left\{
 \begin{array}{l}
 \varphi'(t)=(\varepsilon e+\mu)\varphi(t)-(\mu-\varepsilon e)\varphi^2(t),\\
 \varphi(0)=\|u_0\|_{L^\infty(\Omega)}
\end{array}
\right.
\end{equation}
which satisfies
 \begin{equation}\label{3.14}
 \varphi(t)\leq  \max\{\|u_0\|_{L^\infty(\Omega)}, \frac{\varepsilon e+\mu}{\mu-\varepsilon e}\}
\end{equation}
according to the explicit solution thereof.

Thanks to the latter,  $(\widehat{a},\widehat{b} )$ is  actually a super-solution of \eqref{3.12}. Indeed, due to \eqref{3.3},
$\frac{\varepsilon e+\mu}{\mu-\varepsilon e}<2$ and  $K\sqrt{\varepsilon}<1$. Hence from \eqref{3.14} and \eqref{3.1}, it follows that
 \begin{equation}\label{3.15}
 \begin{array}{ll}
 &K(\varphi(t)+K\varepsilon^{\frac 12} e^{-\delta t})e^{\varepsilon}\varepsilon+\varphi(t)\\[2mm]
 \leq  &  K(\max\{\|u_0\|_{L^\infty(\Omega)}, \frac{\varepsilon e+\mu}{\mu-\varepsilon e}\}
 +K\varepsilon^{\frac 12} )e^{\varepsilon}\varepsilon+  \max\{\|u_0\|_{L^\infty(\Omega)}, \frac{\varepsilon e+\mu}{\mu-\varepsilon e}\}  \\[2mm]
 \leq  &  K\varepsilon e\max\{\|u_0\|_{L^\infty(\Omega)}+1, 3\}  + \max\{\|u_0\|_{L^\infty(\Omega)}, \frac{\varepsilon e+\mu}{\mu-\varepsilon e}\}
 \\[2mm]
  \leq  &   \sqrt{\varepsilon} e\max\{\|u_0\|_{L^\infty(\Omega)}+1, 3\}  + \max\{\|u_0\|_{L^\infty(\Omega)}, \frac{\varepsilon e+\mu}{\mu-\varepsilon e}\} \\[2mm]
  \leq & K(1-\delta),
 \end{array}
  \end{equation}
  which implies that
 $$
  \hat{b}_t\geq e^{-v}\nabla \cdot(e^v\nabla\hat{b})-\hat{b}+\varepsilon e^{\varepsilon}\hat{b}(\hat{a}+\hat{b}) e^{-a_*t}+
\sqrt{\varepsilon} \hat{a}e^{-\delta t},x\in\Omega,t\in (0,T).
 $$
In addition, due to \eqref{w1.6} and \eqref{3.2},
$$\hat{b}(x,0)\geq \|w_0\|_{L^\infty(\Omega)}\geq  w_0(x)e^{-v_0(x)}.
$$
Apart from that, in light of \eqref{3.13}, we have
$$
\hat{a}_t\geq e^{-v}\nabla \cdot(e^{v}\nabla \hat{a})+\varepsilon e^{\varepsilon}\hat{a}(\hat{a}+\hat{b}) e^{-a_*t}+\mu\hat{a}-\mu \hat{a}^2,
~~~\hat{a}(x,0)\geq u_0(x)e^{-v_0(x)}.$$
Therefore $ (\hat{a}, \hat{b})$ is readily verified to be a super-solution of problem \eqref{3.11}, and thereby  by
comparison principle  $$ a(x,t)\leq \hat{a}(x,t)\leq \max\{\|u_0\|_{L^\infty(\Omega)},2\},~~~
 b(x,t)\leq \hat{b}(x,t)= K\sqrt{\varepsilon} e^{-\delta t}. $$
\end{proof}

 Based on the outcomes in Lemma  \ref{lemma3.2} and  Lemma \ref{lemma3.3}, we can verify that actually $T=T_{max}$ provided
 $T_{max}<\infty$, which is stated as follows.

 \begin{lemma}\label{lemma3.4}Let the assumptions in Theorem  \ref{T1.1} hold and  the initial data $(u_0,v_0,w_0,z_0)$
fulfills \eqref{1.4}--\eqref{z1.7} with  $\varepsilon<\min\{\varepsilon_0,\varepsilon_1\}$.  Then
 \begin{equation}\label{3.16}
 z(x,t)\leq (\sqrt{\varepsilon}-\frac \varepsilon 2)e^{-\delta t} \quad \hbox{for all}~~ x\in\Omega, t\in(0,T),
 \end{equation}
as well as $T=T_{max}$ provided
 $T_{max}<\infty$.
 \end{lemma}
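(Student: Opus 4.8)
The plan is to establish the refined bound \eqref{3.16} by a one-line comparison argument for the $z$-equation, and then to turn the \emph{strictness} of that bound against the maximality defining $T$. Throughout I work on $(0,T)$ under the standing hypothesis $T<T_{max}$, so that every estimate from Lemma \ref{lemma3.2} and Lemma \ref{lemma3.3} is available, together with $v\le v_0\le\varepsilon$.

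First I would rewrite the fourth line of \eqref{2.1} in the original variables through $u=ae^{v}$ and $w=be^{v}$, so that
$$z_t=D_z\Delta z-z-ae^{v}z+\beta be^{v}.$$
The point is that the two coefficients entering here admit one-sided control tailored to the comparison: since $v\ge0$ gives $e^{v}\ge1$, the lower bound $a\ge a_*$ of Lemma \ref{lemma3.2} yields $ae^{v}\ge a_*$ on the absorptive side, while $v\le\varepsilon$ combined with $b\le K\sqrt{\varepsilon}e^{-\delta t}$ from Lemma \ref{lemma3.3} gives $\beta be^{v}\le\beta K\sqrt{\varepsilon}e^{\varepsilon}e^{-\delta t}$ on the source side. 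Here $a_*=\bigl(\tfrac{\mu e^{\varepsilon}}{\mu-\sqrt{\varepsilon}}+\tfrac{e^{\sqrt{\varepsilon}/\delta+\varepsilon}}{\min_{\Omega}u_0}\bigr)^{-1}$ is exactly the constant in \eqref{3.6}.

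The core step is to check that the spatially homogeneous function $\bar z(t):=(\sqrt{\varepsilon}-\tfrac{\varepsilon}{2})e^{-\delta t}$ is a supersolution. Since $\Delta\bar z=0$ and $\bar z_t=-\delta\bar z$, the desired differential inequality $\bar z_t-D_z\Delta\bar z+(1+ae^{v})\bar z-\beta be^{v}\ge0$ reduces, after inserting the two bounds above and dividing by $\sqrt{\varepsilon}e^{-\delta t}>0$, to
$$\beta Ke^{\varepsilon}\le(1+a_*-\delta)\bigl(1-\tfrac{1}{2}\sqrt{\varepsilon}\bigr).$$
This is precisely where the engineered shape of \eqref{3.1} pays off: its right-hand constraint reads $\beta Ke^{\varepsilon}<(1+a_*-\delta)(1-\tfrac{3}{2}\sqrt{\varepsilon})$, and the elementary inequality $1-\tfrac{3}{2}\sqrt{\varepsilon}<1-\tfrac{1}{2}\sqrt{\varepsilon}$ together with $1+a_*-\delta>0$ closes the gap with strict room to spare. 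For the initial comparison I would invoke \eqref{z1.7}: for $\varepsilon<\varepsilon_0$ (so that $\tfrac{3}{2}\sqrt{\varepsilon}<1$) one has $\|z_0\|_{L^\infty(\Omega)}<\varepsilon\le\sqrt{\varepsilon}-\tfrac{\varepsilon}{2}=\bar z(0)$. The parabolic comparison principle applied to the scalar $z$-equation (with $a,b,v$ frozen as coefficients) then delivers \eqref{3.16} on $(0,T)$.

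Finally, to obtain $T=T_{max}$ I would argue by contradiction from maximality. If $T<T_{max}$, then $z\in C^0(\overline\Omega\times[0,T])$, and the definition $T=\sup\mathcal S$ forces the boundary equality $\|z(\cdot,T)\|_{L^\infty(\Omega)}=\sqrt{\varepsilon}e^{-\delta T}$, since otherwise the strict inequality in \eqref{3.4} would persist slightly beyond $T$. But letting $t\nearrow T$ in \eqref{3.16} gives $\|z(\cdot,T)\|_{L^\infty(\Omega)}\le(\sqrt{\varepsilon}-\tfrac{\varepsilon}{2})e^{-\delta T}<\sqrt{\varepsilon}e^{-\delta T}$, a contradiction. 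Hence $T<T_{max}$ is impossible, which yields $T=T_{max}$ whenever $T_{max}<\infty$. I expect the only delicate point to be the bookkeeping in the supersolution inequality — one must feed the \emph{lower} bound for $a$ into the dissipative term but the \emph{upper} bound for $b$ into the production term, and it is solely the margin between the $1-\tfrac{3}{2}\sqrt{\varepsilon}$ built into \eqref{3.1} and the $1-\tfrac{1}{2}\sqrt{\varepsilon}$ actually required that manufactures the strict improvement $\sqrt{\varepsilon}\mapsto\sqrt{\varepsilon}-\tfrac{\varepsilon}{2}$ powering the maximality contradiction.
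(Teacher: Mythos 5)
Your proposal is correct and follows essentially the same route as the paper: both arguments compare $z$ with a spatially homogeneous supersolution built from the bounds $a\ge a_*$ (Lemma \ref{lemma3.2}) and $w\le K\sqrt{\varepsilon}e^{\varepsilon}e^{-\delta t}$ (Lemma \ref{lemma3.3}), close the resulting inequality via \eqref{3.1}, and finish with the identical continuity/maximality contradiction at $t=T$. The only difference is bookkeeping: the paper solves the comparison ODE with initial datum $\|z_0\|_{L^{\infty}(\Omega)}$ and estimates its Duhamel representation, spending the full $1-\tfrac{3}{2}\sqrt{\varepsilon}$ margin there, whereas you verify the supersolution inequality directly (using only $1-\tfrac{1}{2}\sqrt{\varepsilon}$) and spend the remaining margin on the initial ordering $\varepsilon\le\sqrt{\varepsilon}-\tfrac{\varepsilon}{2}$, which is legitimate since positivity of the right-hand side of \eqref{3.1} already forces $\tfrac{3}{2}\sqrt{\varepsilon}<1$.
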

\begin{proof}
According to  Lemma  \ref{lemma3.2} and  Lemma \ref{lemma3.3}, we have
 $$u(x,t)\geq a(x,t)\geq
 a_*:= \{
\frac{
e^{\frac{\sqrt{\varepsilon}}{\delta}+\epsilon}}{\min_{x\in \Omega}u_0(x)}+\frac{\mu e^{\varepsilon}}{\mu-\sqrt{\varepsilon}}
\}^{-1}$$
 and
 $$ w(x,t)\leq b(x,t) e^{\varepsilon}\leq  K\sqrt{\varepsilon}  e^{\varepsilon} e^{-\delta t}. $$
 Hence in light of the fourth equation of \eqref{2.1}, $z$ satisfies
 \begin{equation}\label{3.17}
 z_t\leq D_z \Delta z- a_* z- z+\beta  K\sqrt{\varepsilon}  e^{\varepsilon} e^{-\delta t}, ~~ x\in\Omega, 0<t<T.
 \end{equation}

 Now we  may compare $z$ to
spatially homogeneous functions having a supersolution property with regard to the parabolic
operator in \eqref{3.17}. Indeed,
 if $\hat{z}(t)$ denotes the solution of the initial-value problem
 \begin{equation*}\left\{
   \begin{array}{l}
y_t=- (a_*+1) y+\beta  K\sqrt{\varepsilon}  e^{\varepsilon} e^{-\delta t},~~ 0<t<T,\\[2mm]
y(0)=\|z_0\|_{L^{\infty}(\Omega)},
\end{array}
\right.
 \end{equation*}
   then from the comparison principle we infer that
 \begin{equation}\label{3.18}
  \begin{array}{ll}
z(x,t)\leq \hat{z}(t)&= \|z_0\|_{L^{\infty}(\Omega)}e^{-(a_*+1)t}+ \beta  K\sqrt{\varepsilon}  e^{\varepsilon} \displaystyle\int^t_0
e^{-(a_*+1)(t-s)}  e^{-\delta s}ds\\[2mm]
&\leq \varepsilon e^{-(a_*+1)t}+\displaystyle\frac{\beta  K\sqrt{\varepsilon}  e^{\varepsilon}}{1+a_*-\delta} e^{-\delta t}\\
&\leq (\sqrt{\varepsilon}- \displaystyle\frac{ \varepsilon}2) e^{-\delta t},
\end{array}
 \end{equation}
 where  inequality \eqref{3.1} is used  in the last inequality.

 At this position, supposed that  $T<T_{max}$, then according to the definition of $T$ and continuity of $z$, we have
$\|z(\cdot,T)\|_{L^{\infty}(\Omega)}=\sqrt{\varepsilon} e^{-\delta T}$, which contradicts with   \eqref{3.18} and thereby shows that   actually $T=T_{max}$.

\end{proof}

\section{Global solvability}
The purpose of this section is to show that under the assumptions in Theorem \ref{T1.1}, $(a,v,b,z)$ is actually  a global solution of \eqref{2.1}.
 To this end, in light of \eqref{2.2} and the  outcomes of Lemma \ref{lemma3.3} and Lemma \ref{lemma3.4},  we need to establish a bound for $\nabla v$ with respect to the norm in $L^5(\Omega)$, rather $L^4(\Omega)$ in the two-dimensional setting. With respect to this, we first derive the spatial-temporal estimates of $\Delta a$  through the testing processes, thanks to the $L^\infty(\Omega)$
    -estimates of $a,b$ and $z$ just asserted.
\begin{lemma}\label{lemma4.1} Let $(a,v,b,z)$ be the classical solution of \eqref{2.1} on $(0,T_{max})$ obtained in previous section. Then if $T_{max}<\infty$, one can find $C>0$ fulfilling
\begin{equation}\label{4.1}
\int^t_0\int_{\Omega}(|\triangle a|^2+|\triangle b|^2)\leq C ~~~~~ \hbox{for all}~~ t\in(0,T_{max})
\end{equation}
as well as
\begin{equation}\label{4.2}
\int_{\Omega}|\nabla v(\cdot,t)|^4\leq C ~~~~~ \hbox{for all}~~ t\in(0,T_{max}).	
\end{equation}
\end{lemma}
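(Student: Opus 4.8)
The plan is to derive a single differential inequality for the coupled functional
\[
y(t):=\tfrac12\int_\Omega|\nabla a|^2+\tfrac12\int_\Omega|\nabla b|^2+\tfrac\eta4\int_\Omega|\nabla v|^4
\]
with a suitably small $\eta>0$, and then to close it by a \emph{linear} Gronwall argument on the finite interval $(0,T_{max})$. First I would rewrite the first two equations of \eqref{2.1} in non-divergence form, $a_t=\Delta a+\nabla v\cdot\nabla a+f$ and $b_t=\Delta b+\nabla v\cdot\nabla b+g$, test them against $-\Delta a$ and $-\Delta b$ respectively, and use the Neumann conditions $\partial_\nu a=\partial_\nu b=0$ to obtain
\[
\tfrac12\tfrac{d}{dt}\|\nabla a\|_{L^2}^2+\|\Delta a\|_{L^2}^2=-\int_\Omega(\nabla v\cdot\nabla a)\Delta a-\int_\Omega f\Delta a,
\]
and the analogue for $b$. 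For the $\nabla v$-part I would differentiate $v_t=-(a+b)e^v v$ in space and test the resulting identity for $\nabla v$ against $|\nabla v|^2\nabla v$, which produces the genuinely dissipative contribution $\eta\int_\Omega(a+b)e^v(1+v)|\nabla v|^4\ge\eta a_*\|\nabla v\|_{L^4}^4$, invoking the pointwise lower bound $a\ge a_*$ from Lemma \ref{lemma3.2}, together with the cross term $-\eta\int_\Omega e^v v\,|\nabla v|^2\nabla v\cdot\nabla(a+b)$.

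After summing, the only dangerous contributions are the diffusion cross terms $\int_\Omega|\nabla v|^2|\nabla a|^2$ and $\int_\Omega|\nabla v|^2|\nabla b|^2$ (arising by Young's inequality from $\int(\nabla v\cdot\nabla a)\Delta a$) and the $\nabla v$-flow cross term above; the reaction contributions $\int f\Delta a$ and $\int g\Delta b$ are harmless, since the $L^\infty$-bounds on $a,b,z$ and the smallness of $v$ from Lemmas \ref{lemma3.2}--\ref{lemma3.4} give $\|f\|_{L^2},\|g\|_{L^2}\le C$, whence $\int f\Delta a\le\delta\|\Delta a\|_{L^2}^2+C$. The key quantitative device is the Gagliardo--Nirenberg inequality in the form $\|\nabla a\|_{L^4}^2\le C\|\Delta a\|_{L^2}\|a\|_{L^\infty}+C\|a\|_{L^\infty}^2$, combined with the elliptic estimate $\|D^2a\|_{L^2}\le C(\|\Delta a\|_{L^2}+\|\nabla a\|_{L^2})$ under the Neumann condition, which, thanks to the boundedness of $\|a\|_{L^\infty}$ already secured in Lemma \ref{lemma3.3}, is \emph{linear} in $\|\Delta a\|_{L^2}$; this is exactly what keeps the eventual Gronwall term linear rather than superlinear. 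Using it with H\"older and Young, $\int|\nabla v|^2|\nabla a|^2\le\|\nabla v\|_{L^4}^2\|\nabla a\|_{L^4}^2\le\delta\|\Delta a\|_{L^2}^2+C_\delta\|\nabla v\|_{L^4}^4+C$; the $\nabla v$-flow term is handled similarly, its smallness factor $e^v v\le e^\varepsilon\varepsilon$ allowing the resulting $\|\nabla(a+b)\|_{L^4}^4\le C(\|\Delta a\|_{L^2}^2+\|\Delta b\|_{L^2}^2)+C$ to be absorbed into the second-order dissipation once $\eta$ and $\varepsilon$ are fixed small.

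Collecting everything yields $y'(t)+c_0\bigl(\|\Delta a\|_{L^2}^2+\|\Delta b\|_{L^2}^2+\eta\|\nabla v\|_{L^4}^4\bigr)\le C_1y(t)+C_2$ for some $c_0>0$. Since by Lemma \ref{lemma3.4} it suffices to argue on $(0,T_{max})$ with $T_{max}<\infty$, Gronwall's lemma gives $y(t)\le C$ throughout, which is \eqref{4.2}; integrating the differential inequality in time then yields $\int_0^t(\|\Delta a\|_{L^2}^2+\|\Delta b\|_{L^2}^2)\le C$ on $(0,T_{max})$, that is, \eqref{4.1}. The main obstacle is precisely the diffusion cross term $\int|\nabla v|^2|\nabla a|^2$ in the three-dimensional setting, where the Sobolev exponents are critical; the resolution is to interpolate $\nabla a$ through the $L^\infty$-bound of $a$ rather than through $\|\nabla a\|_{L^2}$, and to exploit the smallness of $\varepsilon$ and of $\eta$ to absorb the remaining couplings into the $\|\Delta a\|_{L^2}^2$, $\|\Delta b\|_{L^2}^2$ dissipation.
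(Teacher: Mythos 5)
Your proposal is correct and follows essentially the same route as the paper: the same coupled functional $\int_\Omega|\nabla a|^2+\int_\Omega|\nabla b|^2+\eta\int_\Omega|\nabla v|^4$, the same testing of the $a$- and $b$-equations by $-\Delta a$, $-\Delta b$ and of the $v$-equation by $|\nabla v|^2\nabla v$, the same use of the Gagliardo--Nirenberg/elliptic-regularity inequality $\|\nabla\varphi\|_{L^4}^2\leq C\|\Delta\varphi\|_{L^2}\|\varphi\|_{L^\infty}$ together with the lower bound $a\geq a_*$, and the same linear Gronwall argument on the finite interval $(0,T_{max})$. The only cosmetic difference is that you absorb the cross terms directly into $\delta\|\Delta a\|_{L^2}^2$ whereas the paper routes them through the coercivity term $\int_\Omega|\nabla a|^4$ kept on the left; these are equivalent manipulations.
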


\begin{proof}
According to the outcomes of Lemma \ref{lemma3.3} and Lemma \ref{lemma3.4}, we have
\begin{equation}\label{4.3}
a(x,t)\leq c_1:=\max\{\|u_0\|_{L^\infty(\Omega)},2\},~~ v(x,t)\leq 1,~~ b(x,t)\leq 1 ~~ \hbox{and}~ z(x,t)\leq 1,	
\end{equation}
which definitely entail that
$$
|f(a,b,v,z)|\leq c_2:=c_1+c_1(c_1+1)e +\mu c_1^2 e.
$$	
Multiplying  the equation
$$
a_t=\Delta a+\nabla v\cdot\nabla a+f(a,b,v,z)
$$
by $-\Delta a$ and integrating by parts, we obtain that
\begin{equation*}\begin{array}{rl}
&\displaystyle\frac{1}{2}\frac{d}{dt}\int_{\Omega}|\nabla a|^2+\int_{\Omega}|\Delta a|^2\\
=&\displaystyle -\int_{\Omega}\Delta a\nabla v\cdot \nabla a-\int_{\Omega}f\Delta a\\
\leq &\displaystyle \frac{1}{2}\int_{\Omega}|\Delta a|^2+\int_{\Omega}|\nabla v\cdot\nabla a|^2+c_2^2|\Omega|.
\end{array}
\end{equation*}
Here combining the Gagliardo--Nirenberg inequality with the standard elliptic regularity, one can find  $c_3>0$ such that
$$
\|\nabla \varphi\|^2_{L^{4}(\Omega)}\leq c_3\|\Delta \varphi\|_{L^{2}(\Omega)}\|\varphi \|_{L^{\infty}(\Omega)}\quad \hbox{for all $\varphi\in C^{2}(\overline\Omega)$ with $\frac{\partial \varphi}{\partial\nu}=0$}
$$
which together with \eqref{4.3} shows that
\begin{equation}\label{4.4}
\int_{\Omega}|\Delta a |^2	\geq \frac{1}{c^2_1c_3^2}\int_{\Omega}|\nabla a|^4 \quad \hbox{for all $0<t<T_{max}$}.
\end{equation}
Therefore applying  \eqref{4.3} and the Young inequality, we have
\begin{equation}\label{4.5}
\frac{d}{dt}\int_{\Omega}|\nabla a|^2+c_4\int_{\Omega}|\nabla a|^4+c_4\int_{\Omega}|\Delta a|^2\leq c_5\int_{\Omega}|\nabla v|^4+c_5	
\end{equation}
for some constants  $c_4>0, c_5>0$.
The similar argument allows us to find  $c_6>0$ and $c_7>0$ fulfilling
\begin{equation}\label{4.6}
\frac{d}{dt}\int_{\Omega}|\nabla b|^2+c_6\int_{\Omega}|\nabla b|^4+c_6\int_{\Omega}|\triangle b|^2\leq c_7	\int_{\Omega}|\nabla v|^4+c_7.
\end{equation}
To compensate the right summand of \eqref{4.5} and \eqref{4.6}, we test the $v$-equation in \eqref{2.1}   against
$|\nabla v|^2\nabla v$ to obtain that for all $0<t<T_{max}$,
\begin{equation}\label{4.7}
\begin{array}{rl}
\displaystyle\frac 14\frac{d}{dt}\int_{\Omega} |\nabla v|^4=
&-\displaystyle\int_{\Omega} v e^v |\nabla v|^2\nabla v\cdot (\nabla a+\nabla b)-\int_{\Omega}e^v (v+1)(a+b)|\nabla v|^4\\[2mm]
\leq & -\displaystyle\int_{\Omega}ve^v |\nabla v|^2\nabla v\cdot (\nabla a+\nabla b)-\displaystyle\int_{\Omega}ae^{v}|\nabla v|^4
\end{array}
\end{equation}
Recalling the  lower bound for $a$ in \eqref{3.6}, one  can find  $c_8>0$ such that
$$
\int_{\Omega}ae^{v}|\nabla v|^4\geq c_{8}\int_{\Omega}|\nabla v|^4.
$$
So we combine \eqref{4.7} and \eqref{4.3} with the Young inequality to see that
\begin{equation}\label{4.8}
\frac{d}{dt}\int_{\Omega}|\nabla v|^4+c_{9}\int_{\Omega}|\nabla v|^4\leq
 c_{10}(\int_{\Omega}|\nabla a|^4+\int_{\Omega}|\nabla b|^4)
\end{equation}
for some $c_9>0, c_{10}>0$.

Therefore,  through an appropriate combination of \eqref{4.5}, \eqref{4.6} and \eqref{4.8}, one can find
  $\eta>0$, $c_{11}>0$ and $c_{12}>0$  such that
\begin{equation}\label{4.9}
\begin{array}{rl}
&\displaystyle\frac{d}{dt}\left(\int_{\Omega}|\nabla a|^2+\int_{\Omega}|\nabla b|^2+\eta\int_{\Omega}|\nabla v|^4\right)
+c_{11} \int_{\Omega}(|\triangle a|^2+|\triangle b|^2)\\[3mm]
\leq &c_{12}\displaystyle\left(\int_{\Omega}|\nabla a|^2+\int_{\Omega}|\nabla b|^2+\eta\int_{\Omega}|\nabla v|^4\right)+c_{12}.
\end{array}
\end{equation}
At this position, writing $y(t):=\int_{\Omega}|\nabla a(\cdot,t)|^2+\int_{\Omega}|\nabla b(\cdot,t)|^2+\eta\int_{\Omega}|\nabla v(\cdot,t)|^4+1$, we
 then have
\begin{equation*}
y^{\prime}(t)+c_{11} \int_{\Omega}(|\triangle a|^2+|\triangle b|^2) \leq  c_{12} y(t) 	
\end{equation*}
for all $0<t<T_{max}$, which  leads to
$$
y(t)\leq y_0 e^{c_{12}T_{\max}}
$$
as well as
\begin{equation*}
\int^t_0\int_{\Omega}(|\triangle a|^2+|\triangle b|^2) \leq  C(T_{\max})
\end{equation*}
for all $0<t<T_{max}$, and thereby completes the proof.
\end{proof}
Now making efficient use of the spatial-temporal estimates of $\Delta a$ provided by Lemma \ref{lemma4.1}, one can derive the further regularity property of $\nabla v$ in the three-dimensional setting beyond that in \eqref{4.2}.

\begin{lemma}\label{lemma4.2}
Assume that $T_{max}<\infty$, one can find $C>0$ fulfilling
\begin{equation}\label{4.10}
\int_{\Omega}|\nabla v(\cdot,t)|^5\leq C ~~~~~ \hbox{for all}~~ t\in(0,T_{max}).	
\end{equation}
\end{lemma}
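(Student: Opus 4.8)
The plan is to test a spatially differentiated form of the $v$-equation against $|\nabla v|^3\nabla v$, using the pointwise lower bound $a\ge a_*$ from Lemma~\ref{lemma3.2} as an absorptive device, and to feed in the space-time regularity of $\Delta a,\Delta b$ furnished by Lemma~\ref{lemma4.1}. Throughout I would keep the already established pointwise bounds $a\le c_1$, $v\le 1$, $b\le 1$, $z\le 1$ from \eqref{4.3}, together with the space-time estimate $\int_0^t\int_\Omega(|\Delta a|^2+|\Delta b|^2)\le C$ on $(0,T_{max})$.

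First I would differentiate the third equation of \eqref{2.1}. Since $v_t=-(a+b)e^v v$, one has $\partial_t\nabla v=-v e^v\nabla(a+b)-(a+b)e^v(v+1)\nabla v$; multiplying by $|\nabla v|^3\nabla v$ and integrating over $\Omega$ gives
\begin{equation*}
\frac{1}{5}\frac{d}{dt}\int_\Omega|\nabla v|^5
=-\int_\Omega v e^v|\nabla v|^3\nabla v\cdot\nabla(a+b)
-\int_\Omega(a+b)e^v(v+1)|\nabla v|^5 .
\end{equation*}
Because $(a+b)e^v(v+1)\ge a_*$ by Lemma~\ref{lemma3.2}, the last term supplies an absorptive contribution bounded above by $-a_*\int_\Omega|\nabla v|^5$, while the first, using $v\le 1$, $e^v\le e$ and Hölder's inequality, is controlled by $e\|\nabla v\|_{L^5(\Omega)}^4\|\nabla(a+b)\|_{L^5(\Omega)}$. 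Writing $P(t):=\|\nabla v(\cdot,t)\|_{L^5(\Omega)}$ and dividing out the common factor $P^4$ (after a standard regularization to avoid the set where $P=0$), this reduces to the linear differential inequality $P'+a_*P\le e\|\nabla(a+b)\|_{L^5(\Omega)}$.

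The crux is then to show that $s\mapsto\|\nabla(a+b)(\cdot,s)\|_{L^5(\Omega)}$ is integrable on $(0,T_{max})$. Here I would combine elliptic regularity for the Neumann problem, $\|D^2a\|_{L^2(\Omega)}\le C(\|\Delta a\|_{L^2(\Omega)}+1)$, with the three-dimensional Gagliardo--Nirenberg inequality $\|\nabla a\|_{L^5(\Omega)}\le C\|D^2a\|_{L^2(\Omega)}^{4/5}\|a\|_{L^\infty(\Omega)}^{1/5}+C\|a\|_{L^\infty(\Omega)}$ to obtain $\|\nabla a\|_{L^5(\Omega)}\le C(\|\Delta a\|_{L^2(\Omega)}^{4/5}+1)$, and likewise for $b$. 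An application of Hölder's inequality in time with exponents $\tfrac52$ and $\tfrac53$ then yields
\begin{equation*}
\int_0^{T_{max}}\|\nabla a\|_{L^5(\Omega)}\,ds
\le C\Big(\int_0^{T_{max}}\|\Delta a\|_{L^2(\Omega)}^2\,ds\Big)^{2/5}T_{max}^{3/5}+CT_{max},
\end{equation*}
which is finite thanks to \eqref{4.1} and $T_{max}<\infty$.

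Finally, integrating the linear inequality for $P$ gives $P(t)\le P(0)+e\int_0^{T_{max}}\|\nabla(a+b)\|_{L^5(\Omega)}\,ds<\infty$, so that $\int_\Omega|\nabla v(\cdot,t)|^5\le C$ on $(0,T_{max})$, which is \eqref{4.10}; note $P(0)=\|\nabla v_0\|_{L^5(\Omega)}<\infty$ since $v_0\in C^{2+\vartheta}(\overline\Omega)$. The main obstacle, and the reason a naive differential inequality closed by Young's inequality fails, is precisely the control of $\|\nabla(a+b)\|_{L^5}$: only the space-time bound $\Delta a,\Delta b\in L^2$ is at hand, so neither a pointwise-in-time $L^5$ bound nor its fifth power integrated in time can be afforded. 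The absorptive structure coming from the lower bound on $a$ is what allows the estimate to close, since it turns the cross term into a \emph{first-power} contribution $\int_0^{T_{max}}\|\nabla(a+b)\|_{L^5}$, and the subcritical Gagliardo--Nirenberg exponent $4/5<1$ renders this integrable against the $L^2$-in-time bound on $\Delta a,\Delta b$.
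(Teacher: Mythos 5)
Your proof is correct and follows essentially the same route as the paper: test the $v$-equation with $|\nabla v|^3\nabla v$, reduce to the first-power inequality $\frac{d}{dt}\|\nabla v\|_{L^5(\Omega)}\lesssim \|\nabla a\|_{L^5(\Omega)}+\|\nabla b\|_{L^5(\Omega)}$, bound these via the Gagliardo--Nirenberg/elliptic-regularity estimate $\|\nabla a\|_{L^5(\Omega)}\lesssim \|\Delta a\|_{L^2(\Omega)}^{4/5}\|a\|_{L^\infty(\Omega)}^{1/5}$, and integrate in time against \eqref{4.1} using $T_{max}<\infty$. The only cosmetic differences are that the paper simply discards the nonpositive term $-\int_\Omega(a+b)e^v(v+1)|\nabla v|^5$ rather than retaining $-a_*P$ (your closing remark overcredits this absorption: what produces the first-power structure is dividing by $P^4$, i.e.\ the homogeneity of the $L^5$ norm, not the lower bound on $a$), and the paper uses the pointwise-in-time Young inequality $\|\Delta a\|_{L^2(\Omega)}^{4/5}\le \|\Delta a\|_{L^2(\Omega)}^{2}+1$ where you use H\"older in time with exponents $\tfrac52,\tfrac53$.
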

\begin{proof} Multiplying the third equation in \eqref{2.1} by
$|\nabla v|^3\nabla v$, integrating by parts and applying Young's inequality, we obtain that for all $0<t<T_{max}$,
\begin{equation}\label{4.11}
\begin{array}{rl}
\displaystyle\frac 15\frac{d}{dt}\int_{\Omega} |\nabla v|^5=
&-\displaystyle\int_{\Omega} v e^v |\nabla v|^3\nabla v\cdot (\nabla a+\nabla b)-\int_{\Omega}e^v (v+1)(a+b)|\nabla v|^4\\[2mm]
\leq & -\displaystyle\int_{\Omega}ve^v |\nabla v|^3\nabla v\cdot (\nabla a+\nabla b)\\[2mm]
\leq &  e\left\{\displaystyle\int_{\Omega} |\nabla v|^5\right\}^{\frac 45 }
(\displaystyle \left\{\int_{\Omega}|\nabla a|^5\right\} ^{\frac 15 }+\displaystyle\left\{\int_{\Omega}|\nabla b|^5\right\}^{\frac 15 }).
\end{array}
\end{equation}

Now we see that   $y(t):=\int_{\Omega} |\nabla v(\cdot,t)|^5$, $t\in(0,T_{max})$  satisfies
$$
y'(t)\leq 5e  (\|\nabla a(\cdot,t)\|_{L^5(\Omega)}+\|\nabla b(\cdot,t)
\|_{L^5(\Omega)})
 y^{\frac 45 }(t)
$$
and hence
\begin{equation}\label{4.12}
\|\nabla v(\cdot,t)\|_{L^5(\Omega)}\leq \|\nabla v_0\|_{L^5(\Omega)}+e\int^{t}_0
(\|\nabla a(\cdot,s)\|_{L^5(\Omega)}+\|\nabla b(\cdot,s)
\|_{L^5(\Omega)})ds.
\end{equation}
Furthermore, as
$$
\|\nabla \varphi\|_{L^{5}(\Omega)}\leq c_1\|\Delta \varphi\|^{\frac 45 }_{L^{2}(\Omega)}\|\varphi \|^{\frac 15 }_{L^{\infty}(\Omega)}\quad \hbox{for all $\varphi\in C^{2}(\overline\Omega)$ with $\frac{\partial \varphi}{\partial\nu}=0$}
$$
with constant  $c_1>0$,  we get
\begin{equation*}
\begin{array}{rl}
&\|\nabla v(\cdot,t)\|_{L^5(\Omega)}\\
\leq &\|\nabla v_0\|_{L^5(\Omega)}+c_2\displaystyle\int^{t}_0
(\|\Delta a(\cdot,s)\|^2_{L^2(\Omega)}+\|\Delta b(\cdot,s)
\|^2_{L^2(\Omega)}+1)ds
\end{array}
\end{equation*}
for  $c_2>0$, which along with \eqref{4.1}  yields \eqref{4.10} immediately.
\end{proof}

\begin{corollary}\label{corollary4.1} Under the assumptions in Theorem \ref{T1.1}, problem \eqref{1.3}
admits a unique global solution $(u,v,w,z)$.
\end{corollary}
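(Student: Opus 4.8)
The plan is to argue by contradiction through the extensibility criterion \eqref{2.2} of Lemma \ref{lemma2.1}, working first at the level of the transformed system \eqref{2.1} and only at the end transferring the conclusion back to \eqref{1.3}. Suppose, to the contrary, that $T_{max}<\infty$. The strategy is then to show that each of the four quantities appearing in \eqref{2.2}, namely $\|a(\cdot,t)\|_{L^\infty(\Omega)}$, $\|b(\cdot,t)\|_{L^\infty(\Omega)}$, $\|z(\cdot,t)\|_{L^\infty(\Omega)}$ and $\|\nabla v(\cdot,t)\|_{L^5(\Omega)}$, stays bounded as $t\nearrow T_{max}$, so that the $\limsup$ in \eqref{2.2} cannot diverge, which is incompatible with $T_{max}<\infty$.

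First I would invoke Lemma \ref{lemma3.4}, which guarantees that under $T_{max}<\infty$ one has $T=T_{max}$; consequently the a priori bounds collected in Lemma \ref{lemma3.3} and the decay estimate \eqref{3.16} are valid on the entire interval $(0,T_{max})$. In particular this yields $\|a(\cdot,t)\|_{L^\infty(\Omega)}\le\max\{\|u_0\|_{L^\infty(\Omega)},2\}$, together with $\|b(\cdot,t)\|_{L^\infty(\Omega)}\le K\sqrt{\varepsilon}\,e^{-\delta t}\le K\sqrt{\varepsilon}$ and $\|z(\cdot,t)\|_{L^\infty(\Omega)}\le\sqrt{\varepsilon}\,e^{-\delta t}\le\sqrt{\varepsilon}$ for all $t\in(0,T_{max})$, so the first three summands in \eqref{2.2} are uniformly controlled. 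The fourth summand is precisely what Lemma \ref{lemma4.2} supplies: under $T_{max}<\infty$ one has $\int_\Omega|\nabla v(\cdot,t)|^5\le C$, i.e. $\|\nabla v(\cdot,t)\|_{L^5(\Omega)}\le C^{1/5}$, uniformly on $(0,T_{max})$.

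Combining these bounds, the left-hand side of \eqref{2.2} remains finite, contradicting $T_{max}<\infty$; hence $T_{max}=\infty$ and $(a,v,b,z)$ is a global classical solution of \eqref{2.1}. To finish, I would transfer this back to \eqref{1.3} by reversing the substitution, setting $u=ae^{v}$ and $w=be^{v}$: since \eqref{1.3} and \eqref{2.1} are equivalent in the classical-solution framework, $(u,v,w,z)$ is a global classical solution of \eqref{1.3}, and its uniqueness and nonnegativity are inherited from the corresponding properties in Lemma \ref{lemma2.1} together with the comparison arguments of Section 3. For the corollary itself the genuine difficulty has already been discharged in the preceding lemmas; the one point deserving emphasis is that, in the three-dimensional setting, closing the extensibility criterion forces control of $\nabla v$ in $L^5(\Omega)$ rather than merely $L^4(\Omega)$, and it is exactly this gain, obtained in Lemma \ref{lemma4.2} by feeding the spatio-temporal $\Delta a$ and $\Delta b$ estimates of Lemma \ref{lemma4.1} into a Gagliardo--Nirenberg interpolation, that constitutes the real obstacle behind the statement.
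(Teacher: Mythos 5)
Your proposal is correct and follows essentially the same route as the paper: the paper's own (very brief) proof likewise combines the $L^\infty$ bounds of Lemmas \ref{lemma3.2}--\ref{lemma3.3} with the $L^5$ bound on $\nabla v$ from Lemma \ref{lemma4.2} and the extensibility criterion \eqref{2.2}, then transfers the conclusion to \eqref{1.3} via the equivalence of the two systems. If anything, you are more explicit than the paper in invoking Lemma \ref{lemma3.4} to guarantee $T=T_{max}$ (so that the bounds on $b$ and $z$ really hold on all of $(0,T_{max})$), a step the paper leaves implicit.
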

\begin{proof}
According to the equivalence of \eqref{1.3}  and \eqref{2.1} in the considered framework
of classical solutions, the global existence readily results  from  Lemma \ref{lemma3.2}, Lemma \ref{lemma3.3}, Lemma \ref{lemma4.2} and
the  extensibility criteria \eqref{2.2} in Lemma 2.1.
\end{proof}

\section{Exponential decay of $u-1$}
Throughout this section, $(a,v,b,z)$ and $(u,v,w,z)$  are the global classical solution of \eqref{2.1} and  \eqref{1.3}, respectively.
Combining the outcomes of Lemma \ref{lemma3.2}, Lemma \ref{lemma3.3} and Lemma \ref{lemma3.4} with  Corollary \ref{corollary4.1}, we can see that
\eqref{1.9}--\eqref{1.11} in Theorem \ref{T1.1} have been proved already, and thereby only need to show that \eqref{1.8} is valid.

The decay property \eqref{1.8} will be shown by means of the bootstrap method adapted from\cite{WX-preprint},  the latter is  concerned with the two-dimensional version of \eqref{1.2} with $\beta<1$.  As the start point  of the derivation of \eqref{1.8},  we establish the
following basic stabilization feature of $a(=e^{- v}u)$
 upon the  decay property of $v$ with respect to $L^\infty(\Omega)$.
\begin{lemma}\label{lemma5.1}
Let the assumptions in Theorem \ref{T1.1} hold. Then
\begin{equation}\label{5.1}
\int^{\infty}_0\int_{\Omega}\frac{|\nabla a|^2}{a^2}< \infty
	\end{equation}
and
\begin{equation}\label{5.2}
\int^{\infty}_0\int_{\Omega}(u-1)^2< \infty.
	\end{equation}
\end{lemma}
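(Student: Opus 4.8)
The plan is to capture both quantities in \eqref{5.1} and \eqref{5.2} simultaneously, as the dissipation of a single weighted entropy functional. Concretely, I would work with
\[
\mathcal{E}(t):=\int_{\Omega}e^{v}\,(a-\ln a),
\]
which is the natural Lyapunov candidate here: the map $a\mapsto a-\ln a$ is strictly convex with minimum at the equilibrium value $a=1$, and $\nabla\ln a=\nabla a/a$ produces precisely the integrand in \eqref{5.1}. Thanks to the two-sided bound $a_*\le a\le c_1:=\max\{\|u_0\|_{L^\infty(\Omega)},2\}$ guaranteed by \eqref{3.6} and Lemma \ref{lemma3.3}, together with $1\le e^{v}\le e^{\varepsilon}$ (recall $0\le v\le\varepsilon$), the functional $\mathcal{E}$ is bounded above and bounded below by a positive constant, uniformly in $t$.

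Next I would differentiate $\mathcal{E}$, writing the first equation of \eqref{2.1} in divergence form $e^{v}a_t=\nabla\cdot(e^{v}\nabla a)+e^{v}f$ and testing it against $1-\tfrac1a$. Integration by parts (the Neumann condition kills the boundary term) yields the clean dissipation $-\int_{\Omega}e^{v}a^{-2}|\nabla a|^{2}$, with no $\nabla v\cdot\nabla a$ cross term --- this is exactly why the $e^{v}$-weight is preferable to the bare functional $\int_{\Omega}(a-\ln a)$, whose differentiation would instead generate a term requiring the not-yet-available bound on $\int_0^t\int_\Omega|\nabla v|^2$ (that is only Lemma \ref{lemma5.2}). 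Differentiating the weight produces the extra contribution $\int_{\Omega}(\partial_t e^{v})(a-\ln a)=-\int_{\Omega}(a+b)e^{2v}v\,(a-\ln a)$ coming from $v_t=-(a+b)e^{v}v$; by \eqref{3.7} and the boundedness of $a,b$ this is $O(e^{-a_*t})$ and hence harmless.

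The heart of the matter is the reaction contribution $\int_{\Omega}e^{v}f\,(1-\tfrac1a)$. Splitting $f=-az+a(a+b)e^{v}v+\mu a(1-e^{v}a)$, the first two pieces are controlled in absolute value by $Ce^{-\delta t}$ and $Ce^{-a_*t}$, using \eqref{3.16}, \eqref{3.7} and the bounds on $a,b$. For the logistic piece I would use the identity $1-e^{v}a=-(a-1)-a(e^{v}-1)$, which gives
\[
\mu\int_{\Omega}e^{v}(1-e^{v}a)(a-1)=-\mu\int_{\Omega}e^{v}(a-1)^{2}-\mu\int_{\Omega}e^{v}a(e^{v}-1)(a-1);
\]
the first term is the desired coercive quantity (and is $\le-\mu\int_\Omega(a-1)^2$ since $e^v\ge1$), while the second is $O(e^{-a_*t})$ because $e^{v}-1\le e\,v$ and $v\le\varepsilon e^{-a_*t}$. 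Collecting all terms one reaches
\[
\frac{d}{dt}\mathcal{E}(t)+\int_{\Omega}\frac{|\nabla a|^{2}}{a^{2}}+\mu\int_{\Omega}(a-1)^{2}\le h(t),
\]
with $h\in L^{1}(0,\infty)$ (indeed $h(t)\le Ce^{-\kappa t}$ for $\kappa=\min\{a_*,\delta\}>0$). Integrating in time and invoking the uniform lower bound on $\mathcal{E}$ yields \eqref{5.1} together with $\int_0^\infty\int_\Omega(a-1)^2<\infty$. Finally \eqref{5.2} follows from $u=ae^{v}$ via $u-1=(a-1)+a(e^{v}-1)$, so that $(u-1)^2\le2(a-1)^2+2c_1^2e^2 v^2$, and $\int_0^\infty\int_\Omega v^2<\infty$ once more by the exponential decay \eqref{3.7}.

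I expect the main obstacle to be this reaction term: one must extract the sign-definite $-\mu\int_\Omega(a-1)^2$ from the perturbed logistic structure $\mu a(1-e^va)$ and, at the same time, verify that \emph{every} correction --- the $z$- and $v$-terms of $f$, the time-derivative of the weight, and the $(e^v-1)$ defect in the logistic term --- is genuinely integrable in time rather than merely bounded. This is precisely where the exponential decay of $v$, $b$, $z$ from Lemmas \ref{lemma3.2}--\ref{lemma3.4} and the uniform positive lower bound for $a$ are indispensable.
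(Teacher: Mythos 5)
Your proposal is correct and takes essentially the same route as the paper: the paper likewise tests the divergence-form $a$-equation against $1-\tfrac1a$ with the weighted entropy $\int_\Omega e^v(a-1-\log a)$ (yours: $\int_\Omega e^v(a-\ln a)$), kills the cross term by the $e^v$-weight, and controls all reaction contributions by the exponential decay of $v$ and $z$ together with the two-sided bounds on $a$. The only cosmetic difference is that the paper extracts the coercive term directly as $\tfrac{\mu}{2}\int_\Omega(1-u)^2$ via the splitting $(1-a)(1-u)=(1-u)^2+(u-a)(1-u)$, whereas you extract $\mu\int_\Omega(a-1)^2$ and pass to $(u-1)^2$ at the end; both are equivalent.
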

\begin{proof}
In view of $s-1-\log s>0$ for all $s>0$ and $v_t<0$, we can conclude that
\begin{equation}\label{5.3}
\begin{array}{rl}
&\displaystyle\frac{d}{dt}\int_{\Omega}e^v(a-1-\log a)\\
=& \displaystyle\int_{\Omega}e^{v}(a-1-\log a)v_t+\int_{\Omega}\frac{a-1}{a} e^v a_t\\[2mm]
\leq& \displaystyle-\int_{\Omega}e^v\frac{|\nabla a|^2}{a^2}+\mu\int_{\Omega}	e^v (a-1)(1-u)\\
& +\displaystyle\int_{\Omega}e^v (a-1)(u+w)v-
\displaystyle\int_{\Omega}e^v z(a-1).
\end{array}
\end{equation}
Here by Young's inequality,
\begin{equation}\label{5.4}
\begin{array}{rl}
(1-a)(1-u)&=(1-u)^2+(u-a)(1-u)\\
&\geq \displaystyle\frac 12 (1-u)^2- a^2(e^{v}-1)^2.
\end{array}
\end{equation}
Since $e^s\leq 1+2s$ for all $s\in [0,log2]$,  \eqref{3.7} allows us to fix a $t_1>1$ suitable large such that for all $t\geq t_1$,
$
(e^v-1)^2\leq 4 v^2,
$
which along with \eqref{5.4} implies that
$$
(1-a)(1-u)\geq \displaystyle\frac 12 (1-u)^2-4 a^2 v^2
$$
for $t\geq t_1$. Hence thanks to the outcomes of Lemma \ref{lemma3.3} and Lemma \ref{lemma3.4}, one can
obtain  from \eqref{5.3} that for all $t\geq t_1$
\begin{equation*}
\begin{array}{rl}
&\displaystyle \frac{d}{dt}\int_{\Omega}e^{v}(a-1-\log a)+\int_{\Omega}\frac{|\nabla a|^2}{a^2}+
\frac{\mu}2 \int_{\Omega}(u-1)	 ^2\\[2mm]
\leq &\displaystyle
c_1\int_{\Omega}z+c_1\int_{\Omega}v
\end{array}
\end{equation*}
with  $c_1>0$.
Upon a time integration over $(t_1,t)$, the latter  arrives  at
\begin{equation*}
\begin{array}{rl}
&\displaystyle \int^t_{t_1}\int_{\Omega}\frac{|\nabla a|^2}{a^2}+\frac \mu 2 \int^{t}_{t_1} \int_{\Omega}(u-1)^2\\[2mm]
\leq & c_1 \displaystyle\int^{t}_{t_1}\int_{\Omega}z+c_1\int^t_{t_1}\int_{\Omega}v+
\int_{\Omega}e^{v(\cdot,t_1)}(a(\cdot,t_1)-1-\log a (\cdot,t_1)),
\end{array}
\end{equation*}
which together with  \eqref{3.6} and \eqref{3.7} makes sure that  both \eqref{5.1} and \eqref{5.2} hold.	
\end{proof}

In order to make sure that \eqref{5.2} implies the time decay property  of $u-1$, it seems desirable to consider the exponential decay
 properties of  $\int_{\Omega}|\nabla v(\cdot,t)|^2$, rather than the integrability  of   $a_{t}$ in  $L^2((0,\infty); L^2(\Omega))$.  As the first step toward this, we first show the convergence  of integral $\int^{\infty}_0\int_{\Omega}|\nabla v|^2$, which is stated below.
\begin{lemma}\label{lemma5.2}
Suppose that  the assumptions in Theorem  \ref{T1.1} hold. Then we have
\begin{equation}\label{5.5}
\int^{\infty}_0\int_{\Omega}|\nabla v|^2<\infty.	
\end{equation}	
\end{lemma}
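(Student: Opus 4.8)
The plan is to derive a Gronwall-type differential inequality for $y(t):=\int_{\Omega}|\nabla v(\cdot,t)|^2$ and integrate it over $(0,\infty)$, the right-hand side being controlled by spatio-temporal gradient bounds on $a$ and $b$ together with the (exponentially small) boundedness of $v$. As a preliminary I would record the two gradient-integrability facts that feed the source term. First, Lemma \ref{lemma5.1} gives $\int_0^\infty\int_\Omega |\nabla a|^2/a^2<\infty$, and since $a\leq c_1:=\max\{\|u_0\|_{L^\infty(\Omega)},2\}$ by Lemma \ref{lemma3.3}, this immediately yields $\int_0^\infty\int_\Omega|\nabla a|^2<\infty$.

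Second -- and this is the step not supplied by the earlier sections -- I would establish $\int_0^\infty\int_\Omega|\nabla b|^2<\infty$ by testing the $b$-equation of \eqref{2.1} against $e^v b$. Integration by parts (using $\partial b/\partial\nu=0$) turns the divergence term into $-\int_\Omega e^v|\nabla b|^2$, while the time derivative produces $\tfrac12\frac{d}{dt}\int_\Omega e^v b^2-\tfrac12\int_\Omega v_t e^v b^2$; since $v_t=-(a+b)e^v v\leq0$, the latter term is nonnegative and cancels half of the production contribution arising from $g$, leaving
\[
\int_\Omega e^v|\nabla b|^2\leq -\tfrac12\frac{d}{dt}\int_\Omega e^v b^2+\int_\Omega e^v abz+\tfrac12\int_\Omega (a+b)e^{2v}vb^2.
\]
Integrating over $(0,\infty)$, the two integral terms on the right are finite because $b,z$ decay exponentially (Lemmas \ref{lemma3.3}, \ref{lemma3.4}), $v$ decays exponentially by \eqref{3.7}, and $a$ is bounded; since $e^v\geq1$ this gives $\int_0^\infty\int_\Omega|\nabla b|^2<\infty$.

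With both gradient bounds in hand I would treat the $v$-equation as a pointwise ODE $v_t=-hv$ with $h:=(a+b)e^v$, noting the crucial lower bound $h\geq a e^v\geq a_*>0$ furnished by \eqref{3.6}. Differentiating in space gives $(\nabla v)_t=-h\nabla v-v\nabla h$ with $\nabla h=e^v(\nabla a+\nabla b)+h\nabla v$; multiplying by $\nabla v$ and integrating yields
\[
\tfrac12 y'(t)=-\int_\Omega h|\nabla v|^2-\int_\Omega vh|\nabla v|^2-\int_\Omega ve^v\nabla v\cdot(\nabla a+\nabla b)\leq -a_* y(t)-\int_\Omega ve^v\nabla v\cdot(\nabla a+\nabla b),
\]
where I discarded the nonnegative term $\int_\Omega vh|\nabla v|^2$ and used $h\geq a_*$. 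A Young inequality on the remaining cross term absorbs half of $a_*y$ and leaves $y'(t)+a_* y(t)\leq C\int_\Omega v^2 e^{2v}(|\nabla a|^2+|\nabla b|^2)$.

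Finally I would integrate this inequality in time. Because $v\leq\varepsilon\leq1$ makes $v^2 e^{2v}$ bounded (indeed exponentially decaying by \eqref{3.7}), the right-hand side is dominated by $C\int_\Omega(|\nabla a|^2+|\nabla b|^2)$, whose time integral is finite by the first two steps; hence $a_*\int_0^t y\leq y(0)+C\int_0^\infty\int_\Omega(|\nabla a|^2+|\nabla b|^2)$ uniformly in $t$, which is exactly \eqref{5.5}. I expect the main obstacle to be the auxiliary estimate $\int_0^\infty\int_\Omega|\nabla b|^2<\infty$, since it is not available from the earlier lemmas and requires the weighted testing above; the strict positivity $a\geq a_*$ is likewise indispensable, both for the coercive term $h\geq a_*$ and for the absorption in the Young step.
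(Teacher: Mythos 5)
Your proof is correct and takes essentially the same route as the paper: both arguments first establish $\int_0^\infty\int_\Omega|\nabla b|^2<\infty$ by testing the $b$-equation against $e^v b$, then derive an absorptive inequality $y'(t)+a_*y(t)\le h(t)$ for $y(t)=\int_\Omega|\nabla v|^2$ from the evolution identity for $\nabla v$, the pointwise lower bound \eqref{3.6} and Young's inequality, and conclude by time integration. The only cosmetic difference is that your source term reads $C\int_\Omega v^2e^{2v}\left(|\nabla a|^2+|\nabla b|^2\right)$ while the paper's is $c_2\int_\Omega\left(|\nabla b|^2+\frac{|\nabla a|^2}{a^2}\right)$; these are interchangeable since $a$ is bounded above and below.
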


\begin{proof}
Testing the second equation in \eqref{2.1} against $e^{ v}b$ and integrating by parts, we get
\begin{eqnarray*}
\frac{d}{dt}\int_{\Omega}e^{v}b^2+2\int_{\Omega}e^{ v}|\nabla b|^2+2 \int_{\Omega}e^{ v}b^2\\
=2\int_{\Omega}abz e^{ v}+2\int_{\Omega}e^{v}b^2( a e^{ v}+ b e^v)v,	
\end{eqnarray*}
which together with the global-in-time boundedness property of $a$ and $z$, implies that	
\begin{eqnarray*}
\frac{d}{dt}\int_{\Omega}e^{ v}b^2+\int_{\Omega} |\nabla b|^2+2\int_{\Omega}e^{v}b^2
\leq c_1\int_{\Omega}b	
\end{eqnarray*}
for some $c_1>0$.
Hence according to Lemma \ref{lemma3.3}, we can get
\begin{equation}\label{5.6}
\int^{\infty}_0\int_{\Omega}|\nabla b|^2<\infty.	
\end{equation}
Now since
 $$
 \nabla v_t=-( \nabla u + \nabla w )v-( u +   w ) \nabla v,
 $$
a direct computation shows that
\begin{equation*}
\begin{array}{rl}
&\displaystyle \frac{1}{2}\frac{d}{dt}\int_{\Omega}|\nabla v|^2+\int_{\Omega}(u+w)|\nabla v|^2\\
=&-\displaystyle \int_{\Omega}v e^{ v} b|\nabla v|^2-
\displaystyle\int_{\Omega}v e^{ v}\nabla v\cdot \nabla b-
\displaystyle\int_{\Omega}v\nabla v\cdot \nabla u\\
\leq&-\displaystyle\int_{\Omega}v e^{v}\nabla v\cdot \nabla b
- \displaystyle\int_{\Omega}v   e^{ v}\nabla v\cdot \nabla a.	
\end{array}
\end{equation*}
Therefore, thanks to  the point-wise lower bound in \eqref{3.6}, and the boundedness of $a$ and $v$,  we can find
find  $c_2>0$ such that
\begin{equation}\label{5.7}
\frac{d}{dt}\int_{\Omega}|\nabla v|^2+a_* \int_{\Omega}|\nabla v|^2\leq c_2
(
\int_{\Omega}|\nabla b|^2+
\int_{\Omega}\frac{|\nabla a|^2}{a^2}
).
\end{equation}
Let $y(t):=\int_{\Omega}|\nabla v|^2$ and  $h(t):= c_2
(
\int_{\Omega}|\nabla b|^2+
\int_{\Omega}\frac{|\nabla a|^2}{a^2}
)$. Then we infer from \eqref{5.7} that for $t>0$,
\begin{equation}\label{5.8}
y'(t)+a_* y(t)\leq h(t),
\end{equation}
where \eqref{5.6} and  \eqref{5.1} warrant the existence of $c_3>0$ such that
\begin{equation}\label{5.9}
\int^{\infty}_0 h(s)ds\leq c_3.
\end{equation}
Therefore thanks to \eqref{5.9}, an integration of \eqref{5.8} yields \eqref{5.5}.
\end{proof}

Upon estimates \eqref{5.5}, 
\eqref{5.6} and  \eqref{5.1}, we make use of the explicit expression  of
$\nabla v$  to verify the exponential decay property of $\int_{\Omega}|\nabla v|^2$.

\begin{lemma}\label{lemma5.3}
There exists $C>0$
such that
\begin{equation}\label{5.10}
\int_{\Omega}|\nabla v|^2\leq C(t+1)e^{-2a_* t}~~\hbox{for all $t>0$,}	
\end{equation}
where $a_*= \{
\frac{
e^{\frac{\sqrt{\varepsilon}}{\delta}+\epsilon}}{\min_{x\in \Omega}u_0(x)}+\frac{\mu e^{\varepsilon}}{\mu-\sqrt{\varepsilon}}
\}^{-1}$.	
\end{lemma}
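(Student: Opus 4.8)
The plan is to bypass the Gronwall-type inequality \eqref{5.8} and instead exploit the explicit representation of $v$ afforded by its own ODE, which is what produces the \emph{doubled} decay rate $e^{-2a_*t}$. Since $v_t=-(u+w)v$, integration in $t$ gives the pointwise formula
$$v(x,t)=v_0(x)\exp\Big(-\int_0^t (u+w)(x,s)\,ds\Big),$$
so that differentiating in $x$ yields
$$\nabla v(x,t)=\Big[\nabla v_0(x)-v_0(x)\int_0^t \nabla(u+w)(x,s)\,ds\Big]\exp\Big(-\int_0^t (u+w)(x,s)\,ds\Big).$$
The decisive structural input is the pointwise lower bound $u\geq a\geq a_*$ from Lemma \ref{lemma3.4} (recall $u=ae^v\geq a$ since $v\geq 0$) together with $w\geq 0$; these force $\int_0^t(u+w)\,ds\geq a_*t$, so the exponential factor above is dominated by $e^{-a_*t}$.

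Next I would square, use $(\alpha+\beta)^2\le 2\alpha^2+2\beta^2$, and apply the Cauchy--Schwarz inequality in the time variable to the remaining integral, which gives
$$|\nabla v(x,t)|^2\leq 2e^{-2a_*t}\Big(|\nabla v_0(x)|^2+\|v_0\|_{L^\infty(\Omega)}^2\, t\int_0^t|\nabla(u+w)(x,s)|^2\,ds\Big).$$
It is precisely this Cauchy--Schwarz step that generates the linear prefactor $t$, and hence the $(t+1)$ appearing in \eqref{5.10}. Integrating over $\Omega$ then reduces the whole matter to establishing $\int_0^\infty\int_\Omega|\nabla(u+w)|^2<\infty$.

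To control that last space-time integral I would pass back to the transformed variables: from $u=ae^v$ and $w=be^v$ one computes $\nabla u=e^v(\nabla a+a\nabla v)$ and $\nabla w=e^v(\nabla b+b\nabla v)$, whence $|\nabla(u+w)|^2\leq C(|\nabla a|^2+|\nabla b|^2+|\nabla v|^2)$ by the $L^\infty$ bounds on $e^v$, $a$ and $b$ from Lemma \ref{lemma3.3}. Its finiteness then follows by combining the three already-convergent integrals \eqref{5.1}, \eqref{5.6} and \eqref{5.5}; writing $M$ for the resulting finite value and $M_0:=\int_\Omega|\nabla v_0|^2$, the previous display integrates to $\int_\Omega|\nabla v|^2\leq 2e^{-2a_*t}(M_0+\|v_0\|_{L^\infty(\Omega)}^2 M\,t)\le C(t+1)e^{-2a_*t}$, which is \eqref{5.10}. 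I expect the only genuine subtlety to lie in this final bookkeeping: the change of variables unavoidably injects the $\nabla v$ cross-terms, so all three of \eqref{5.1}, \eqref{5.5} and \eqref{5.6} must be invoked together, and one must first convert \eqref{5.1} into an honest $L^2$ bound on $\nabla a$ via $|\nabla a|^2\le c_1^2\,|\nabla a|^2/a^2$ using the upper bound $a\le c_1$ rather than using \eqref{5.1} directly.
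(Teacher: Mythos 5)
Your proof is correct and is essentially the paper's own argument: both rest on the explicit representation of $\nabla v$ obtained by differentiating $v=v_0\exp(-\int_0^t(u+w)\,ds)$, the pointwise bound $u+w\geq a_*$ from \eqref{3.6}, Cauchy--Schwarz in time (which is exactly where the factor $t$, hence $(t+1)$, arises), and the conversion of $\nabla u,\nabla w$ into $\nabla a,\nabla b,\nabla v$ so that the three convergent integrals \eqref{5.1}, \eqref{5.5} and \eqref{5.6} can be invoked. The only cosmetic differences are that you spell out the conversion of \eqref{5.1} into an $L^2$ bound on $\nabla a$ via the upper bound on $a$ (which the paper leaves implicit) and that the lower bound $a\geq a_*$ should be credited to Lemma \ref{lemma3.2} rather than Lemma \ref{lemma3.4}.
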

\begin{proof}
According to the second equation in \eqref{1.3}, we have
\begin{equation*}
\nabla v(\cdot,t)= \nabla v(\cdot,0)e^{-\int^{t}_0(u+w)(\cdot,s)ds} -v(\cdot,0)e^{-\int^{t}_0(u+w)(\cdot,s)ds}
\int^{t}_0(\nabla u(\cdot,s)+\nabla w(\cdot,s))ds,	
\end{equation*}
which, together  with the fact that $w\geq 0$, $u=ae^v\geq a_*$ due to \eqref{3.6} and thereby $u(x,t)+w(x,t)\geq a_* $ for $x\in \Omega, t>0$,
implies that
\begin{equation}\label{5.11}
\begin{array}{ll}
&\displaystyle\int_{\Omega}|\nabla v(\cdot,t)|^2\\[3mm]
\leq&
2e^{-2a_* t}\|\nabla v_0\|^2_{L^2(\Omega)}+4 te^{-2a_* t}\displaystyle
(\int^{t}_0\int_{\Omega}|\nabla u|^2ds+\int^{t}_0\int_{\Omega}|\nabla w|^2ds).
\end{array}
\end{equation}	
Further, since
\begin{equation*}
|\nabla w|=|e^v \nabla b+ e^vb \nabla v|\leq e(|\nabla v|b+|\nabla b|)	
\end{equation*}
as well as \begin{equation*}
|\nabla u| =|e^v \nabla a+ e^v a \nabla a|\leq  e(|\nabla v|a+|\nabla a|),	
\end{equation*}
we  infer from \eqref{5.11} that there exists $c_1>0$ such that for $t>0$,
\begin{equation}\label{4.21}
\begin{array}{rl}
\displaystyle\int_{\Omega}|\nabla v(\cdot,t)|^2
&\leq
c_1e^{-2a_*t}+c_1 te^{-2a_* t}
\displaystyle\int^{t}_0\int_{\Omega}(|\nabla b|^2+|\nabla a|^2 +|\nabla v|^2)ds\\[3mm]
&\leq
c_1e^{-2a_* t}+c_1 te^{-2a_*t}
\displaystyle\int^{\infty}_0\int_{\Omega}(|\nabla b|^2+|\nabla a|^2 +|\nabla v|^2)ds
\end{array}
\end{equation}	
and thereby thanks to estimates \eqref{5.5}, \eqref{5.6} and  \eqref{5.1},  we get
$$
\displaystyle\int_{\Omega}|\nabla v(\cdot,t)|^2\leq c_2(t+1)e^{-2a_* t}
$$
with some $c_2>0$.
\end{proof}
On the basis of smoothing estimates for the Neumann heat semigroup on $\Omega$, we merely turn the
 decay information provided by Lemma \ref{lemma5.3} into
the exponential decay of $u-1$  with respect to  the norm in $L^p(\Omega)$ for arbitrary $p<6$.

\begin{lemma}\label{lemma5.4} There exists $\gamma>0$ such that for every $p<6$,
\begin{equation}\label{5.13}
\|u(\cdot,t)-1\|_{L^p(\Omega)}\leq C(p) e^{-\gamma t}
\end{equation}	
with some $C(p)>0$ for all $t>0$.
\end{lemma}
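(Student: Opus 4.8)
The plan is to prove the $L^p$ decay by a two–stage bootstrap: first establish the exponential decay of $\|u(\cdot,t)-1\|_{L^2(\Omega)}$ by a direct energy argument, and then upgrade it to all $p<6$ through the smoothing action of the Neumann heat semigroup. Throughout I would rewrite the $u$–equation of \eqref{1.3}, using $\mu u(1-u)=-\mu(u-1)-\mu(u-1)^2$, in the form
\begin{equation*}
(u-1)_t=(\Delta-\mu)(u-1)-\mu(u-1)^2-\nabla\cdot(u\nabla v)-uz,
\end{equation*}
so that the linear part already carries the dissipation rate $\mu$ and the remaining three terms are regarded as forcing.

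For the first stage I would test this identity against $u-1$. Combining the diffusion and haptotaxis contributions and integrating by parts with the no-flux boundary condition in \eqref{1.3} gives $-\int_\Omega|\nabla u|^2+\int_\Omega u\nabla u\cdot\nabla v$, the latter being absorbed via $\int_\Omega u\nabla u\cdot\nabla v\le\tfrac12\|\nabla u\|_{L^2(\Omega)}^2+C\|\nabla v\|_{L^2(\Omega)}^2$. The reaction contribution is $\int_\Omega(u-1)\mu u(1-u)=-\mu\int_\Omega u(u-1)^2$, and here the pointwise lower bound $u=ae^v\ge a_*$ supplied by Lemma \ref{lemma3.2} (cf. \eqref{3.6}) is decisive: it turns this term into the genuinely dissipative quantity $\le-\mu a_*\|u-1\|_{L^2(\Omega)}^2$, irrespective of how far $u_0$ is from $1$. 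Controlling $-\int_\Omega(u-1)uz$ by Young's inequality against $\|z\|_{L^\infty(\Omega)}$, I arrive at
\begin{equation*}
\frac{d}{dt}\|u-1\|_{L^2(\Omega)}^2+\mu a_*\|u-1\|_{L^2(\Omega)}^2\le C\|\nabla v\|_{L^2(\Omega)}^2+C\|z\|_{L^\infty(\Omega)}^2,
\end{equation*}
whose right-hand side decays exponentially by Lemma \ref{lemma5.3} and \eqref{3.16}. A Gronwall argument then yields $\|u(\cdot,t)-1\|_{L^2(\Omega)}\le Ce^{-\gamma_0 t}$ for some $\gamma_0>0$ (up to a harmless polynomial factor that can be absorbed into a slightly smaller rate).

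For the second stage I would use the variation-of-constants representation
\begin{equation*}
u(\cdot,t)-1=e^{t(\Delta-\mu)}(u_0-1)+\int_0^t e^{(t-s)(\Delta-\mu)}\big[-\mu(u-1)^2-\nabla\cdot(u\nabla v)-uz\big](\cdot,s)\,ds,
\end{equation*}
estimated in $L^p(\Omega)$ with the standard $L^q$–$L^p$ bounds for the Neumann semigroup, writing $e^{\tau(\Delta-\mu)}=e^{-\mu\tau}e^{\tau\Delta}$. The first term and the $uz$–term decay exponentially directly (the latter since $\|uz\|_{L^p(\Omega)}\le C\|z\|_{L^\infty(\Omega)}$). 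For the haptotaxis term I would invoke $\|e^{\tau\Delta}\nabla\cdot w\|_{L^p(\Omega)}\le C\tau^{-\frac12-\frac32(\frac1q-\frac1p)}\|w\|_{L^q(\Omega)}$ with $q=2$ and $w=u\nabla v$; the singularity $\tau^{-\frac54+\frac{3}{2p}}$ is integrable precisely when $p<6$, and convolving it with the exponentially small $\|\nabla v\|_{L^2(\Omega)}$ from Lemma \ref{lemma5.3} produces an exponentially decaying contribution. The quadratic term I would treat through $\|(u-1)^2\|_{L^r(\Omega)}=\|u-1\|_{L^{2r}(\Omega)}^2$: taking $r=1$ together with the $L^2$–decay from the first stage already gives the claim for $p<3$, and one further step with $r$ slightly below $\tfrac32$ (so that $2r<3$ and the decay of $\|u-1\|_{L^{2r}(\Omega)}$ is available) extends it to every $p<6$. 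Taking the minimum of these finitely many rates produces a single $\gamma>0$.

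The main obstacle is precisely the reaction nonlinearity together with the absence of any smallness hypothesis on $u_0-1$: a naive linearization $(u-1)^2\le\|u-1\|_{L^\infty(\Omega)}|u-1|$ would reintroduce a linear term with an uncontrolled constant and fail to close, and a barrier/continuity argument would in turn require a smallness of $u_0-1$ that is not assumed. The key to circumventing this is the uniform positivity $u\ge a_*$ from Lemma \ref{lemma3.2}, which makes the reaction term dissipative in the $L^2$ energy identity and thereby supplies, without any barrier argument, the a priori exponential $L^2$ decay that tames the quadratic forcing in the semigroup bootstrap. The remaining care is purely the exponent bookkeeping of the three-dimensional $L^2$–$L^p$ estimate, which is what fixes the admissible range at $p<6$.
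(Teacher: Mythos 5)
Your proposal is correct and takes essentially the same two-stage route as the paper: first the $L^2$ energy estimate (testing the $u$-equation by $u-1$, using the lower bound \eqref{3.6} together with Lemma \ref{lemma5.3} and \eqref{3.16}, then Gronwall), and then a variation-of-constants bootstrap with the Neumann heat semigroup in which the haptotactic term $\nabla\cdot(u\nabla v)$, measured against $\|\nabla v(\cdot,s)\|_{L^{2}(\Omega)}$, produces the singularity $(t-s)^{-\frac54+\frac{3}{2p}}$ and hence the restriction $p<6$. The only difference is cosmetic: the paper keeps the reaction forcing in the form $(\mu u-\eta)(1-u)$, i.e.\ linear in $u-1$ with a bounded coefficient (the constant in the bound $\|(u-1)^2\|_{L^2(\Omega)}\le\|u-1\|_{L^\infty(\Omega)}\|u-1\|_{L^2(\Omega)}$ is in fact controlled, since $u$ is globally bounded by Lemma \ref{lemma3.3}, so no circularity arises), which makes your two-step bootstrap in the exponent $r$ for the quadratic term $-\mu(u-1)^2$ valid but unnecessary.
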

\begin{proof}
Testing the first equation in \eqref{1.3} by $u-1$ and  integrating by parts, we have
\begin{equation*}
\begin{array}{rl}
&\displaystyle\frac{d}{dt}\int_{\Omega}(u-1)^2+2\int_{\Omega}|\nabla u|^2+2\mu\int_{\Omega}u(u-1)^2\\[2mm]
=&2\displaystyle\int_{\Omega}u\nabla v\cdot\nabla u-2\int_{\Omega}(u-1)uz.
\end{array}
\end{equation*}	
We thereupon make use of \eqref{3.6}  and Lemma \ref{lemma3.3} along with the Young inequality to  get
\begin{equation}\label{5.14}
\begin{array}{rl}
&\displaystyle\frac{d}{dt}\int_{\Omega}(u-1)^2+\int_{\Omega}|\nabla u|^2+2\mu a_* \int_{\Omega}(u-1)^2\\
\leq &\displaystyle \int_{\Omega}u^2|\nabla v|^2+2\int_{\Omega}uz\\
\leq & c_1\displaystyle \int_{\Omega}|\nabla v|^2+c_1\int_{\Omega}z
\end{array}
\end{equation}	
with some $ c_1>0$.

Thanks to the outcomes of Lemma \ref{lemma5.3} and Lemma \ref{lemma3.4},  \eqref{5.14} readily leads to
\begin{equation}\label{5.15}
\int_{\Omega}(u-1)^2\leq c_2 e^{-2\eta t}
\end{equation}	
with $\eta:=\min\{\mu a_*, a_*,\frac{\delta}2\}$ and  $c_2>0$ for all $t>0$.

According to known smoothing estimates for the Neumann heat semigroup on $\Omega\subset \mathbb{R}^3$ \cite{WinklerJDE},
 there exist $c_3=c_3(p,q)>0$, $c_4=c_4(p,q)>0$ fulfilling
\begin{equation}\label{5.16}
\left \| e^{\sigma \Delta } \varphi\right \|_{L^{p}(\Omega)}
\leq c_3 \sigma^{-\frac32(\frac 1q-\frac 1p)}
 \| \varphi  \|_{L^q(\Omega)}
\end{equation}
for each $\varphi\in  C^0(\Omega)$, and
 for all $\varphi\in \left ( L^{q}\left ( \Omega  \right ) \right )^{3}$,
\begin{equation}\label{5.17}
\left \| e^{\sigma \Delta }\nabla\cdot \varphi\right \|_{L^{p}\left ( \Omega  \right )}\leq c_{4} ( 1+\sigma^{-\frac{1}{2}-
\frac32( \frac{1}{q}-\frac{1}{p}) } )e^{-\lambda _{1}\sigma}\left \|  \varphi \right \|_{L^{q}\left ( \Omega  \right )}
\end{equation}
with $ \lambda _{1}> 0$  the first nonzero eigenvalue of $-\Delta$ in $\Omega $ under the Neumann boundary condition.

Applying a variation-of-constants representation of $u$ related to the the first equation in \eqref{1.3} and  utilizing \eqref{5.16} and \eqref{5.17}, we infer that
\begin{equation}\label{5.18}
\begin{array}{rl}
&\|(u-1)(\cdot,t)\|_{L^p(\Omega)}\\
\leq &
\|e^{t(\Delta-\eta )}(u_0-1)\|_{L^p(\Omega)}+
\displaystyle \int^{t}_0\|e^{(t-s)(\Delta-\eta)} \nabla\cdot(u\nabla v) \|_{L^p(\Omega)}ds\\
&+
\displaystyle\int^{t}_0\|e^{(t-s)(\Delta-\eta)}((\mu u-\eta)(1-u)- uz)\|_{L^p(\Omega)}ds	\\
 \leq &c_3(p) \displaystyle e^{-\eta t}\|u_0-1\|_{L^{p}(\Omega)}+
  c_4(p)
\int^{t}_0(1+(t-s)^{-\frac 54+\frac 3{2p}})e^{-(\eta+\lambda_1)(t-s)}\|\nabla v(\cdot,s)\|_{L^{2}(\Omega)}ds\\
&
+c_5(p)\displaystyle\int^{t}_0(1+(t-s)^{-\frac 34+\frac 3{2p}})e^{-\eta(t-s)}  \|(u-1)(\cdot,s)\|_{L^2(\Omega)}ds\\
&
+c_5(p)\displaystyle\int^{t}_0(1+(t-s)^{-\frac 34+\frac 3{2p}})e^{-\eta(t-s)}  \|z(\cdot,s)\|_{L^2(\Omega)}ds
\end{array}
\end{equation}
for  some $c_5(p)>0$.
Therefore by \eqref{5.15}, \eqref{5.10}, \eqref{3.16}  and  the fact that for
 $\alpha\in(0,1) $  $\gamma_1$ and  $\delta_1$ positive constants with $ \gamma_1 \neq \delta_1$, there exists $c_6> 0$ such that
$$\int_{0}^{t} ( 1+( t-s  ) ^{-\alpha})e^{-\gamma_1 s}e^{-\delta_1 ( t-s )}ds
\leq c_{6} e^{-min\left \{ \gamma_1 ,\delta_1  \right \}t},
$$
 \eqref{5.18} readily yields  \eqref{5.13} with $\gamma=\eta$ and some $C(p)>0$.
\end{proof}

It is noted that due to the fact that the integrability exponent in \eqref{5.10} does not exceed the considered spatial dimension $N=3$,
the uniform H\"{o}lder bounds for $u$ seems to be unavailable so far,
 though $\nabla v\in L^\infty_{loc}((0,\infty),L^5(\Omega))$ achieved in Lemma \ref{lemma4.2}.
 On the other hand, according to the  extensibility
criteria of the classical solution to \eqref{2.1}, we need to establish the global boundedness of $\|\nabla v(\cdot,t)\|_{L^5(\Omega)}$.
To this end, we first turn to
  make sure that  $\int_{\Omega}|\nabla v|^4$   decays exponentially and inter alia $a,b$ enjoy some higher regularity,
   which results from a series of testing procedures.



\begin{lemma}\label{lemma5.5}
Let the hypothesis in Theorem \ref{T1.1} hold. Then there exist $\alpha>0$ and  $C>0$
such that for all $t>0$,	
\begin{equation}\label{5.19}
\int_{\Omega}(|\nabla v(\cdot,t)|^4+|\nabla a(\cdot,t)|^2+|\nabla b(\cdot,t)|^2)\leq C e^{-\alpha t}
\end{equation}
as well as
\begin{equation}\label{5.20}
\int^\infty_0\int_{\Omega}(|\Delta a|^2+|\Delta b|^2)<\infty.
\end{equation}
\end{lemma}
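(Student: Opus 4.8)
The plan is to upgrade the $L^2$-decay of $\nabla v$ from Lemma \ref{lemma5.3} to $L^4$-decay by running the coupled testing procedure of Lemma \ref{lemma4.1} a second time, but now tracking exponential decay in \emph{every} forcing term. The crucial new input is that, near the equilibrium, the reaction terms no longer contribute a fixed constant but a quantity that vanishes exponentially. First I would record that $\int_\Omega f^2$ and $\int_\Omega g^2$ decay exponentially: writing the logistic contribution as $\mu a(1-e^v a)=\mu a(1-u)$ and using $e^{-v}\le 1$ together with the global bound $a\le c_1$, one has $|f|\lesssim |az|+|a(a+b)e^v v|+|1-u|$ and $|g|\lesssim |b|+|az|+|b(a+b)e^v v|$; hence by the boundedness of $a,b$ from Lemma \ref{lemma3.3} and the exponential decay of $v,b$ (Lemmas \ref{lemma3.2}, \ref{lemma3.3}), of $z$ (Lemma \ref{lemma3.4}) and of $\|u-1\|_{L^2(\Omega)}$ (Lemma \ref{lemma5.4} with $p=2$), there are $C>0$ and $\kappa>0$ with $\int_\Omega(f^2+g^2)\le Ce^{-\kappa t}$ for all $t>0$.

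Next I would repeat the computations leading to \eqref{4.5} and \eqref{4.6}, but keeping this decaying forcing in place of the fixed constants: testing the $a$- and $b$-equations against $-\Delta a$ and $-\Delta b$, using the coercivity estimate \eqref{4.4} and Young's inequality, gives
\[
\frac{d}{dt}\int_\Omega|\nabla a|^2+c_4\int_\Omega|\nabla a|^4+c_4\int_\Omega|\Delta a|^2\le c_5\int_\Omega|\nabla v|^4+Ce^{-\kappa t}
\]
and the analogous inequality for $b$. For the $v$-part I would test the $v$-equation against $|\nabla v|^2\nabla v$ as in \eqref{4.7}, now exploiting the decaying prefactor $\|v(\cdot,t)\|_{L^\infty(\Omega)}\le \varepsilon e^{-a_* t}$ of the cross term and the pointwise lower bound $ae^v\ge a_*$ from \eqref{3.6}; Young's inequality then yields, for $t$ beyond a threshold $t_*$ at which $\|v\|_{L^\infty(\Omega)}$ is small,
\[
\frac{d}{dt}\int_\Omega|\nabla v|^4+c_9\int_\Omega|\nabla v|^4\le e\,\|v(\cdot,t)\|_{L^\infty(\Omega)}\Big(\int_\Omega|\nabla a|^4+\int_\Omega|\nabla b|^4\Big).
\]

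I would then form $y(t)=\int_\Omega|\nabla a|^2+\int_\Omega|\nabla b|^2+\eta\int_\Omega|\nabla v|^4$, fix $\eta$ large enough that $\eta c_9$ exceeds the combined $\int_\Omega|\nabla v|^4$-coefficients coming from the $a$- and $b$-estimates, and take $t_*$ so large that $\eta e\|v\|_{L^\infty(\Omega)}<\min\{c_4,c_6\}$ there, so that the quartic cross terms are absorbed. This produces, for $t\ge t_*$,
\[
y'(t)+c_{11}\int_\Omega(|\Delta a|^2+|\Delta b|^2)+c_{12}\int_\Omega|\nabla v|^4\le Ce^{-\kappa t}.
\]
To convert this into genuine linear dissipation I would invoke the Poincaré inequality for the Neumann Laplacian: since $\partial_\nu a=\partial_\nu b=0$, the function $\Delta a$ has zero mean, whence $\int_\Omega|\nabla a|^2=-\int_\Omega(a-\bar a)\Delta a\le C_P\|\nabla a\|_{L^2(\Omega)}\|\Delta a\|_{L^2(\Omega)}$ and therefore $\int_\Omega|\Delta a|^2\ge C_P^{-2}\int_\Omega|\nabla a|^2$, and likewise for $b$. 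Thus the left-hand side dominates $\alpha\,y(t)$ for some $\alpha>0$, and comparison gives $y(t)\le Ce^{-\alpha t}$ on $[t_*,\infty)$; combined with the now global boundedness from Lemma \ref{lemma4.1} on $[0,t_*]$, this is exactly \eqref{5.19}. Finally, integrating the differential inequality over $(0,\infty)$ and using the exponential decay of $y$ and of the forcing yields \eqref{5.20}.

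I expect the main obstacle to be the closing of the coupling: one must simultaneously absorb the undamped cross term $\int_\Omega|\nabla v|^2|\nabla a|^2$ (handled, as in Lemma \ref{lemma4.1}, through \eqref{4.4} and the choice of $\eta$) and the cross term $\int_\Omega v e^v|\nabla v|^3(|\nabla a|+|\nabla b|)$ in the $v$-equation, whose absorption crucially relies on the smallness and decay of $v$; and, above all, one must convert the quartic and Laplacian dissipation into a \emph{linear}-in-$y$ bound, which is what the Poincaré inequality supplies and which is essential for extracting exponential rather than merely uniform decay.
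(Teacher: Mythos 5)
Your proposal is correct and follows essentially the same route as the paper's proof: both first establish exponential $L^2$-decay of the forcing terms $f,g$ using Lemma \ref{lemma5.4} (with $p=2$) together with the decay of $v$, $b$, $z$, then rerun the coupled testing procedure of Lemma \ref{lemma4.1} on $\int_{\Omega}|\nabla a|^2+\int_{\Omega}|\nabla b|^2+\eta\int_{\Omega}|\nabla v|^4$, absorb the cross terms by the exponential smallness of $\|v(\cdot,t)\|_{L^{\infty}(\Omega)}$, and close with an ODE comparison followed by integration to get \eqref{5.20}. The only cosmetic difference is bookkeeping: the paper builds the linear damping directly into its $a$- and $b$-inequalities \eqref{5.22}--\eqref{5.23} and converts $\int_{\Omega}|\nabla a|^4$ into $\int_{\Omega}|\Delta a|^2$ via the Gagliardo--Nirenberg estimate \eqref{4.4}, whereas you keep the quartic dissipation and make the Poincar\'e-type conversion $\int_{\Omega}|\Delta a|^2\geq C_P^{-2}\int_{\Omega}|\nabla a|^2$ explicit at the end --- the same mechanism in a different order.
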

\begin{proof}
 Testing the identity
\begin{equation*}
a_t= \triangle a +  \nabla v \cdot \nabla a + f(x,t), \quad x\in\Omega, \quad t > 0
\end{equation*}	
with $
f(x,t)=\mu a(1-u)-\displaystyle a z+\ a( u+ w)v$ by $-\triangle a$, and using Young's inequality, we get
\begin{equation}\label{5.21}
\begin{array}{rl}
\displaystyle \frac d {dt}\int_\Omega |\nabla a|^2 +2\displaystyle \int_\Omega|\triangle a|^2 =&-2\displaystyle \int_\Omega
 (\nabla a\cdot \nabla v)\triangle a-
2\int_\Omega f\triangle a\\
\leq&\displaystyle \int_\Omega|\triangle a|^2 +2\int_\Omega  |\nabla a|^2 |\nabla v|^2+ 2\int_\Omega  |f|^2.
\end{array}
\end{equation}
Proceeding as in the proof of \eqref{4.5}, we can find $c_1>0, c_2>0$ such that
 \begin{equation}\label{5.22}
\begin{array}{rl}
&\displaystyle \frac d {dt}\|\nabla a\|^2_ {L^2(\Omega)} + c_1\|\nabla a\|^2_ {L^2(\Omega)}+
c_1 \|\triangle a\|^2_ {L^2(\Omega)} \\[3mm]
\leq&    c_2  \|\nabla v\|^4_ {L^4(\Omega)}
+c_2\|f\|_{L^2(\Omega)}^2.\\[2mm]
\end{array}
\end{equation}
Likely, we also have
\begin{equation}\label{5.23}
\begin{array}{rl}
&\displaystyle \frac d {dt}\|\nabla b\|^2_ {L^2(\Omega)} + c_3\|\nabla b\|^2_ {L^2(\Omega)}+
c_3 \|\triangle b\|^2_ {L^2(\Omega)} \\[3mm]
\leq&    c_4  \|\nabla v\|^4_ {L^4(\Omega)}
+c_4\|g\|_{L^2(\Omega)}^2\\[2mm]
\end{array}
\end{equation}
for some $c_3>0,c_4>0$, where  $
g(x,t)=- b+\displaystyle ue^vz
+b(u+ w)v$.

To appropriately compensate the first summand on right-hand side of \eqref{5.22} and  \eqref{5.23},  we use
the third equation in \eqref{2.1} to see that
\begin{equation}\label{5.24}
\begin{array}{rl}
&\displaystyle \frac 14 \frac d {dt}\int_\Omega |\nabla v|^4\\
  =& -\displaystyle  \int_\Omega|\nabla v|^2 \nabla v\cdot\nabla v_t\\[2mm]
  =&- \displaystyle \int_\Omega   a(v+1)e^{ v}|\nabla v|^4-
\displaystyle \int_\Omega   ve^{v}|\nabla v|^2 \nabla v\cdot\nabla a\\[3mm]
&-\displaystyle \int_\Omega   b(v+1)e^{v}|\nabla v|^4-
 \displaystyle \int_\Omega   ve^{ v}|\nabla v|^2 \nabla v\cdot\nabla b.
\end{array}
\end{equation}
Here recalling the uniform positivity of $a$ stated in Lemma \ref{lemma3.2}, we can pick $c_5>0$ fulfilling
$$
 \displaystyle \int_\Omega   a(v+1)e^{ v}|\nabla v|^4
\geq c_5 \displaystyle \int_\Omega  |\nabla v|^4
$$
and thus  infer by the Young inequality and  Lemma \ref{lemma3.4} that for all $t>0$
\begin{equation}\label{5.25}
\begin{array}{rl}
&\displaystyle  \frac d {dt}\int_\Omega |\nabla v|^4 +c_5\int_\Omega |\nabla v|^4
\\
  \leq &\displaystyle\frac 2{c_5} e^{-\delta t}\displaystyle \int_\Omega  (|\nabla a|^4+ |\nabla b|^4)\\
  \leq &\displaystyle c_6 e^{-\delta t}\displaystyle \int_\Omega  (|\triangle a|^2+ |\triangle b|^2)
\end{array}
\end{equation}
with constant $c_6>0$, where we have used the  Gagliardo--Nirenberg type inequality \eqref{4.4} in the last inequality.

Now   by  the appropriate  linear combination of \eqref{5.22}, \eqref{5.23} and \eqref{5.25}, we can see that
there exists $t_1>1$ suitably large such that
for all $t>t_1$, \begin{equation}\label{5.26}
\begin{array}{ll}
&\displaystyle\frac d {dt}\left(\|\nabla a\|^2_ {L^2(\Omega)} + \|\nabla b\|^2_ {L^2(\Omega)}+c_7\|\nabla v\|_ {L^4(\Omega)}^4\right)+\displaystyle c_8(\|\triangle a\|^2_ {L^2(\Omega)}+ \|\triangle b\|^2_ {L^2(\Omega)})\\[3mm]
+& c_8 \left(\|\nabla a\|^2_ {L^2(\Omega)} + \|\nabla b\|^2_ {L^2(\Omega)}+c_7\|\nabla v\|_ {L^4(\Omega)}^4\right)\\
\leq&  \displaystyle \frac 1{c_8}(\|f\|_{L^2(\Omega)}^2+\|g\|_{L^2(\Omega)}^2).
\end{array}
\end{equation}
 Due to the global boundedness of $a,b,z,v$ achieved in the previous Lemmas, we have
 $$
 |f(x,t)|^2+|g(x,t)|^2\leq c_9(|u(x,t)-1|^2+|b(x,t)|^2+ |z(x,t)|^2+|v(x,t)|^2)
 $$
  with  $c_9>0$,  and thereby there exist $\eta_1>0$ and  $c_{10}>0$
such that
\begin{equation}\label{5.27}
\int_{\Omega}|f(\cdot,t)|^2+ |g(\cdot,t)|^2 \leq c_{10} e^{-\eta_1 t}~~\hbox{for all $t>t_1,$}	
\end{equation}
thanks to   Lemma \ref{lemma5.4}, Lemma \ref{lemma3.2}, Lemma \ref{lemma3.3} and Lemma \ref{lemma3.4}.
Therefore from \eqref{5.26} and \eqref{5.27}, it follows that function
$y(t):=\|\nabla a\|^2_ {L^2(\Omega)} + \|\nabla b\|^2_ {L^2(\Omega)}+c_7\|\nabla v\|_ {L^4(\Omega)}^4
$  satisfies
\begin{equation}\label{5.28}
y'(t)+c_8 y(t)+ \displaystyle c_8(\|\triangle a\|^2_ {L^2(\Omega)}+ \|\triangle b\|^2_ {L^2(\Omega)})\leq \frac{c_{10}}{c_8} e^{-\eta_1 t},
\end{equation}
 and thereby \eqref{5.19} is readily valid with $\alpha=\min\{c_8,\eta_1 \}$. Thereafter  \eqref{5.20} results from an integration of
\eqref{5.28}.
 \end{proof}

Now we can turn the information contained in \eqref{5.19} and  \eqref{5.20} into  the global boundedness of $\|\nabla v(\cdot,t)\|_{L^5(\Omega)}$
 in the three-dimensional framework beyond that in \eqref{4.2}.
\begin{lemma}\label{lemma5.6}
Let the hypothesis in Theorem \ref{T1.1} hold. Then there exists  $C>0$
such that for all $t>0$,	
\begin{equation}\label{5.29}
\int_{\Omega}|\nabla v(\cdot,t)|^5\leq C.
\end{equation}
\end{lemma}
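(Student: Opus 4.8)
The plan is to establish the uniform-in-time bound \eqref{5.29} by combining the exponential decay information from Lemma \ref{lemma5.5} with the integral representation of $\|\nabla v\|_{L^5(\Omega)}$ already derived in Lemma \ref{lemma4.2}. The key observation is that the differential inequality \eqref{4.11}, obtained by testing the $v$-equation against $|\nabla v|^3\nabla v$, is valid on the whole time interval $(0,\infty)$ now that global solvability has been secured in Corollary \ref{corollary4.1}. Hence the variation-type estimate \eqref{4.12} holds for every $t>0$, namely
\begin{equation*}
\|\nabla v(\cdot,t)\|_{L^5(\Omega)}\leq \|\nabla v_0\|_{L^5(\Omega)}+e\int^{t}_0
(\|\nabla a(\cdot,s)\|_{L^5(\Omega)}+\|\nabla b(\cdot,s)\|_{L^5(\Omega)})ds.
\end{equation*}
The right-hand side will remain bounded as $t\to\infty$ provided the integrals of $\|\nabla a\|_{L^5(\Omega)}$ and $\|\nabla b\|_{L^5(\Omega)}$ converge, which is where the exponential decay from the preceding lemma enters.

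The central step is to control $\|\nabla a\|_{L^5(\Omega)}$ and $\|\nabla b\|_{L^5(\Omega)}$ by the quantities shown to decay in \eqref{5.19} and \eqref{5.20}. I would invoke the same Gagliardo--Nirenberg-type inequality used in Lemma \ref{lemma4.2},
\begin{equation*}
\|\nabla \varphi\|_{L^{5}(\Omega)}\leq c_1\|\Delta \varphi\|^{\frac 45 }_{L^{2}(\Omega)}\|\varphi \|^{\frac 15 }_{L^{\infty}(\Omega)},
\end{equation*}
applied to $\varphi=a$ and $\varphi=b$. Since $\|a\|_{L^\infty(\Omega)}$ and $\|b\|_{L^\infty(\Omega)}$ are globally bounded by Lemma \ref{lemma3.3}, this reduces the matter to estimating $\int^\infty_0(\|\Delta a\|^{4/5}_{L^2(\Omega)}+\|\Delta b\|^{4/5}_{L^2(\Omega)})\,ds$. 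Because $4/5<2$, a Hölder split of the form $\int^\infty_0\|\Delta a\|^{4/5}_{L^2(\Omega)}\leq(\int^\infty_0\|\Delta a\|^2_{L^2(\Omega)})^{2/5}(\int^\infty_0 1)^{3/5}$ fails on the unbounded interval, so instead I would exploit the exponential weight directly: pairing the bound $\|\Delta a\|^{4/5}_{L^2(\Omega)}\leq \|\Delta a\|^{2}_{L^2(\Omega)}+1$ via Young's inequality does not suffice either, so the cleaner route is to note that \eqref{5.19} already furnishes $\|\nabla a\|^2_{L^2(\Omega)}\leq Ce^{-\alpha t}$, and to obtain pointwise-in-time exponential decay of $\|\Delta a\|_{L^2(\Omega)}$ on dyadic-type windows from \eqref{5.20}, or alternatively to absorb the $\Delta a$ contribution by integrating \eqref{5.28} against the weight $e^{\eta_1 s/2}$.

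The cleanest argument, and the one I would actually present, bypasses pointwise decay of $\Delta a$: apply Hölder on $(0,t)$ in the time variable after writing $\|\nabla a\|_{L^5(\Omega)}\leq c_1\|\Delta a\|^{4/5}_{L^2(\Omega)}\cdot\sup_s\|a\|^{1/5}_{L^\infty(\Omega)}$, then use Hölder with exponents $5/4$ and $5$ to get $\int^t_0\|\Delta a\|^{4/5}_{L^2(\Omega)}\,ds\leq(\int^t_0\|\Delta a\|^2_{L^2(\Omega)}\,ds)^{2/5}\,t^{3/5}$. This grows like $t^{3/5}$ and is therefore \emph{not} bounded, which signals that the naive estimate is insufficient and that the exponential decay of $\|\Delta a\|^2_{L^2(\Omega)}$ over shifted unit intervals must be used. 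Concretely, from \eqref{5.28} one reads off that $\int^{t+1}_t(\|\Delta a\|^2_{L^2(\Omega)}+\|\Delta b\|^2_{L^2(\Omega)})\,ds\leq Ce^{-\eta_1 t}$ for all large $t$, since the right-hand side of \eqref{5.28} decays exponentially and $y\geq 0$; summing the Hölder estimate over the windows $[n,n+1]$ then yields $\int^\infty_0\|\Delta a\|^{4/5}_{L^2(\Omega)}\,ds\leq\sum_n(\int^{n+1}_n\|\Delta a\|^2_{L^2(\Omega)})^{2/5}\cdot 1^{3/5}\leq C\sum_n e^{-2\eta_1 n/5}<\infty$. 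Feeding this convergent bound, together with the identical estimate for $b$, into \eqref{4.12} gives the desired uniform bound \eqref{5.29}. The main obstacle is precisely this integrability in time: the sub-quadratic power $4/5$ forces one to extract the exponential gain from \eqref{5.28} on windows rather than treating $\int^\infty_0\|\Delta a\|^2$ as a single finite constant, and care is needed to confirm that the interval-wise decay rate survives the exponent $2/5$ and the summation.
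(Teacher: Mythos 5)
Your proof is correct, and it reaches \eqref{5.29} by a genuinely different technical route than the paper, although both arguments share the same skeleton: start from the representation \eqref{5.30} (i.e.\ \eqref{4.12} extended to all $t>0$), reduce the claim to $\int_0^\infty(\|\nabla a\|_{L^5(\Omega)}+\|\nabla b\|_{L^5(\Omega)})\,ds<\infty$, and feed in the outputs of Lemma \ref{lemma5.5}. You keep the interpolation inequality of Lemma \ref{lemma4.2}, $\|\nabla a\|_{L^5(\Omega)}\le c\|\Delta a\|_{L^2(\Omega)}^{4/5}\|a\|_{L^\infty(\Omega)}^{1/5}$, whose second factor is merely bounded, and you therefore must recover time-integrability by going back into \eqref{5.28} and extracting exponential decay of the windowed integrals $\int_t^{t+1}\|\Delta a\|^2_{L^2(\Omega)}\,ds$; your diagnosis that the naive global H\"older split produces an unbounded factor $t^{3/5}$ is exactly right, and the window-plus-summation argument repairs it. The paper avoids the window decomposition altogether by choosing the other natural interpolation, namely
$$\|\nabla a(\cdot,s)\|_{L^5(\Omega)}\leq c \|\Delta a(\cdot,s)\|^{\frac 9{10}}_{L^2(\Omega)}\|\nabla a(\cdot,s)\|_{L^2(\Omega)}^{\frac 1{10}},$$
in which the exponentially decaying quantity $\|\nabla a\|_{L^2(\Omega)}$ from \eqref{5.19} appears as a factor; a single H\"older inequality in time with exponents $\frac{20}{9}$ and $\frac{20}{11}$ then bounds $\int_0^t\|\nabla a\|_{L^5(\Omega)}\,ds$ by $\bigl(\int_0^\infty\|\Delta a\|^2_{L^2(\Omega)}\bigr)^{9/20}\bigl(\int_0^t e^{-\alpha s/11}\,ds\bigr)^{11/20}$, finite directly by \eqref{5.20} and \eqref{5.19}. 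The paper's choice buys a shorter proof; yours has the advantage of reusing the inequality already stated in Lemma \ref{lemma4.2} verbatim. Two small inaccuracies in your write-up, both harmless: the window bound does not follow from ``$y\ge 0$ and exponential decay of the right-hand side of \eqref{5.28}'' alone, because integrating \eqref{5.28} over $(t,t+1)$ leaves the term $y(t)$ on the right, so you also need the exponential decay of $y$ itself, which is precisely \eqref{5.19}; and the resulting rate is $\min\{c_8,\eta_1\}$ (the paper's $\alpha$) rather than $\eta_1$. Since \eqref{5.19} is at your disposal and any positive rate makes the geometric series converge, both points are cosmetic.
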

\begin{proof}
Proceeding as in the proof of \eqref{4.12}, we have
\begin{equation}\label{5.30}
\|\nabla v(\cdot,t)\|_{L^5(\Omega)}\leq \|\nabla v_0\|_{L^5(\Omega)}+e\int^{t}_0
(\|\nabla a(\cdot,s)\|_{L^5(\Omega)}+\|\nabla b(\cdot,s)
\|_{L^5(\Omega)})ds.
\end{equation}
Note that there exist $c_1>0,c_2>0$ such that  for all $\varphi\in C^2(\overline{\Omega}), \frac {\partial \varphi}{\partial \nu} =0$, $\|\varphi-\frac1{|\Omega|}\int_\Omega\varphi\|_{W^{2,2}(\Omega)}\leq c_1 \|\Delta \varphi\|_{L^2(\Omega)}$ and
$\|\varphi-\frac1{|\Omega|}\int_\Omega\varphi\|_{L^{2}(\Omega)}\leq c_2 \|\nabla \varphi\|_{L^2(\Omega)}$, so we have
 $$\|\nabla a(\cdot,s)\|_{L^5(\Omega)}\leq c_3 \|\triangle a(\cdot,s)\|^{\frac 9{10}}_{L^2(\Omega)}
\|\nabla a(\cdot,s)\|_{L^2(\Omega)}^{\frac 1{10}}
$$
as well as
$$\|\nabla b(\cdot,s)\|_{L^5(\Omega)}\leq c_3\|\triangle b(\cdot,s)\|^{\frac 9{10}}_{L^2(\Omega)}
\|\nabla b(\cdot,s)\|_{L^2(\Omega)}^{\frac 1{10}}
$$
for $c_3>0$,
and thereby
\begin{equation}\label{5.31}
\begin{array}{rl}
\displaystyle\int^{t}_0
\|\nabla a(\cdot,s)
\|_{L^5(\Omega)}ds&\leq c_3 \displaystyle\int^t_0 \|\Delta a(\cdot,s)\|_{L^2(\Omega)}^{\frac 9{10}} \|\nabla a(\cdot,s)\|_{L^2(\Omega)}^{\frac 1{10}}ds \\[3mm]
&\leq  c_3\left\{\displaystyle\int^t_0 \|\Delta a(\cdot,s)\|_{L^2(\Omega)}^2ds \right\}^{\frac 9{20} }
 \left\{\displaystyle\int^t_0
 \|\nabla a(\cdot,s)\|_{L^2(\Omega)}^{\frac 2{11}}ds \right\}^{\frac {11}{20} }\\
  &\leq  c_4\left\{\displaystyle\int^\infty _0 \|\Delta a(\cdot,s)\|_{L^2(\Omega)}^2 ds\right\}^{\frac 9{20} }
 \left\{\displaystyle\int^t_0
 e^{-\frac \alpha {11}s}ds \right\}^{\frac {11}{20} }\\
  &< c_5
\end{array}
\end{equation}
as well as
\begin{equation}\label{5.32}
\displaystyle\int^{t}_0
\|\nabla b(\cdot,s)
\|_{L^5(\Omega)}ds
< c_6
\end{equation}
for some $c_5>0,c_6>0$, where we have used \eqref{5.20} and \eqref{5.19}. Hence \eqref{5.29} results
readily from \eqref{5.30}--\eqref{5.32}.
 \end{proof}

At this position, thanks to the known smoothing estimates for the Neumann heat semigroup again, we can readily turn the information
contained in Lemma \ref{lemma5.5} into
 the exponential decay property of $u-1$  with respect to  $L^\infty(\Omega)$-norm.
\begin{lemma}\label{lemma5.7}
Let the assumptions in Theorem \ref{T1.1} hold. Then there exist $\vartheta>0$ and  $C>0$ fulfilling
\begin{equation}\label{5.33}
\|u(\cdot,t)-1\|_{L^\infty(\Omega)}\leq C e^{-\vartheta t}.
\end{equation}	
\end{lemma}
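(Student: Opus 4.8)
The plan is to derive \eqref{5.33} from the $L^p$-decay already in hand (Lemma~\ref{lemma5.4}) together with the higher regularity of $\nabla v$ secured in Lemma~\ref{lemma5.5}, by running the semigroup argument of Lemma~\ref{lemma5.4} one more time but now landing in $L^\infty(\Omega)$. First I would fix some $p\in(3,6)$, so that $\|u(\cdot,t)-1\|_{L^p(\Omega)}\le C(p)e^{-\gamma t}$ holds by \eqref{5.13}, and so that the exponent $3/(2p)$ in the heat-semigroup estimates \eqref{5.16}--\eqref{5.17} is small enough for the relevant time-singularities to be integrable near $s=t$. As in Lemma~\ref{lemma5.4}, I write $u-1$ through a variation-of-constants formula for the shifted operator $\Delta-\eta$ associated with the first equation of \eqref{1.3},
$$
u(\cdot,t)-1=e^{t(\Delta-\eta)}(u_0-1)+\int_0^t e^{(t-s)(\Delta-\eta)}\nabla\!\cdot(u\nabla v)\,ds
+\int_0^t e^{(t-s)(\Delta-\eta)}\bigl((\mu u-\eta)(1-u)-uz\bigr)\,ds,
$$
and estimate each term in $L^\infty(\Omega)$.

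For the first term I would invoke \eqref{5.16} with $q=p$, $p=\infty$, which costs a factor $t^{-\frac{3}{2p}}$ but is harmless for $t$ bounded away from $0$ and, combined with the shift $e^{-\eta t}$, yields exponential decay. The crucial middle term is the haptotaxis contribution: here I apply \eqref{5.17} with $\nabla v$ playing the role of $\varphi$ (using $u\nabla v$, with $u$ bounded), choosing $q=5$ so that by Lemma~\ref{lemma5.6} the $L^5$-norm of $\nabla v$ is globally controlled, while Lemma~\ref{lemma5.5} supplies its exponential decay through \eqref{5.19}. The singularity produced is $(t-s)^{-\frac12-\frac{3}{2}(\frac15-0)}=(t-s)^{-\frac12-\frac{3}{10}}=(t-s)^{-\frac45}$, which is integrable near $s=t$; coupling this with the factor $e^{-(\eta+\lambda_1)(t-s)}$ and the exponential decay of $\|\nabla v(\cdot,s)\|_{L^5(\Omega)}$ gives exponential decay of the whole integral. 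For the zero-order term I use \eqref{5.16} again with source in $L^p(\Omega)$, and bound $\|(\mu u-\eta)(1-u)-uz\|_{L^p(\Omega)}$ by $C(\|u-1\|_{L^p(\Omega)}+\|z\|_{L^p(\Omega)})$, invoking \eqref{5.13} and the decay \eqref{3.16} of $z$.

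Finally I would assemble the three contributions and apply the elementary convolution lemma already quoted in the proof of Lemma~\ref{lemma5.4}, namely that $\int_0^t(1+(t-s)^{-\alpha})e^{-\gamma_1 s}e^{-\delta_1(t-s)}\,ds\le c\,e^{-\min\{\gamma_1,\delta_1\}t}$ for $\alpha\in(0,1)$, to conclude that every term decays like $e^{-\vartheta t}$ for a suitable $\vartheta>0$ determined as the minimum of $\eta$, $\gamma$, $\delta/2$ and the decay rate $\alpha$ from \eqref{5.19}. The main obstacle, and the reason the preceding two lemmas were needed, is purely dimensional: in $\mathbb{R}^3$ the exponent $\frac12+\frac{3}{2q}$ in \eqref{5.17} stays below $1$ only if $q>3$, so one genuinely needs the global $L^5(\Omega)$-bound on $\nabla v$ from Lemma~\ref{lemma5.6} (rather than merely the $L^4$-bound of \eqref{4.2}) to make the temporal singularity integrable; securing that $L^5$-regularity together with its exponential decay is what makes the $L^\infty$-estimate close.
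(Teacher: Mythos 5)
Your overall strategy is exactly the paper's: Lemma \ref{lemma5.7} is proved there by re-running the semigroup argument of Lemma \ref{lemma5.4} with target space $L^{\infty}(\Omega)$, i.e.\ a variation-of-constants formula for a shifted operator, the smoothing estimates \eqref{5.16}--\eqref{5.17}, and the elementary convolution lemma. However, your treatment of the haptotactic term contains a genuine gap. You apply \eqref{5.17} with $q=5$ and assert that Lemma \ref{lemma5.5} supplies the exponential decay of $\|\nabla v(\cdot,s)\|_{L^{5}(\Omega)}$ ``through \eqref{5.19}''. It does not: \eqref{5.19} controls $\int_{\Omega}|\nabla v|^{4}$, i.e.\ the $L^{4}(\Omega)$-norm, whereas Lemma \ref{lemma5.6} gives only a uniform-in-time \emph{bound} on $\|\nabla v\|_{L^{5}(\Omega)}$ with no decay rate; no exponential decay of the $L^{5}$-norm is established anywhere in the paper, and it cannot be produced by interpolating the decaying $L^{4}$-bound against the bounded $L^{5}$-bound, since the endpoint $q=5$ receives interpolation weight zero. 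With only boundedness of $\|\nabla v(\cdot,s)\|_{L^{5}(\Omega)}$, the integral $\int_{0}^{t}\bigl(1+(t-s)^{-4/5}\bigr)e^{-(\eta+\lambda_{1})(t-s)}\|\nabla v(\cdot,s)\|_{L^{5}(\Omega)}\,ds$ is merely bounded, not exponentially decaying, so your middle term does not yield \eqref{5.33}.

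The repair is immediate and is precisely what the paper does: take $q=4$ in \eqref{5.17}. In dimension three this produces the singularity $(t-s)^{-\frac12-\frac38}=(t-s)^{-\frac78}$, still integrable since $\frac78<1$, and \eqref{5.19} then provides exactly the decay $\|\nabla v(\cdot,s)\|_{L^{4}(\Omega)}\le Ce^{-\alpha s/4}$ that the convolution lemma needs. This also exposes an internal inconsistency in your closing rationale: you correctly observe that the exponent $\frac12+\frac{3}{2q}$ stays below $1$ precisely when $q>3$, but then conclude that the $L^{5}$-bound of Lemma \ref{lemma5.6} is genuinely needed here --- it is not, since $q=4>3$ already works (the bound \eqref{4.2} is indeed unusable, but only because it is local in time, being proved under the hypothesis $T_{max}<\infty$ with a constant depending on $T_{max}$; the globally useful $L^{4}$ information is \eqref{5.19}). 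What Lemma \ref{lemma5.6} is actually for is the global boundedness assertion of Theorem \ref{T1.1}, via the extensibility criterion \eqref{2.2}, not the decay estimate \eqref{5.33}. Alternatively, any $q\in(4,5)$ would work by interpolation between \eqref{5.19} and Lemma \ref{lemma5.6}, but $q=4$ is the simplest choice. The remaining parts of your argument (the initial-data term and the zero-order term, using \eqref{5.13} and \eqref{3.16}) are fine.
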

\begin{proof}
 Since the proof is similar to that of  Lemma \ref{lemma5.4}, we only give a short proof of  \eqref{5.33}.
In view to known smoothing estimates for the Neumann heat semigroup on $\Omega\subset \mathbb{R}^2$ (\cite{WinklerJDE}),
 there exist $c_1>0$, $c_2>0$ fulfilling
\begin{equation}\label{5.34}
\left \| e^{\sigma \Delta } \varphi\right \|_{L^{\infty}(\Omega)}
\leq c_1 \sigma^{-\frac 34}
 \| \varphi  \|_{L^2(\Omega)}
\end{equation}
for each $\varphi\in  C^0(\Omega)$, and
 for all $\varphi\in \left ( L^{4}\left ( \Omega  \right ) \right )^{3}$,
\begin{equation}\label{5.35}
\left \| e^{\sigma \Delta }\nabla\cdot \varphi\right \|_{L^{\infty}\left ( \Omega  \right )}\leq c_2 ( 1+\sigma^{-\frac78} )e^{-\lambda_{1}\sigma}\left \|  \varphi \right \|_{L^{4}\left ( \Omega  \right )}
\end{equation}
with $ \lambda _{1}> 0$  the first nonzero eigenvalue of $-\Delta$ in $\Omega $ under the Neumann boundary condition.

According to the variation-of-constants representation of $u$ related to the the first equation in \eqref{1.3}, we utilize \eqref{5.34} and \eqref{5.35} to infer that
\begin{equation}\label{5.36}
\begin{array}{rl}
&\|(u-1)(\cdot,t)\|_{L^\infty(\Omega)}\\
\leq &
\|e^{t(\Delta-1 )}(u_0-1)\|_{L^\infty(\Omega)}+
\displaystyle \int^{t}_0\|e^{(t-s)(\Delta-1)} \nabla\cdot(u\nabla v)(\cdot,s) \|_{L^\infty(\Omega)}ds\\
&+
\displaystyle\int^{t}_0\|e^{(t-s)(\Delta-1)}((\mu u-1)(1-u)-uz)(\cdot,s) \|_{L^\infty(\Omega)}ds	\\
 \leq &
 \displaystyle e^{-t}\|u_0-1\|_{L^{\infty}(\Omega)}+
  c_3
\int^{t}_0(1+(t-s)^{-\frac78})e^{-(1+\lambda_1)(t-s)}\|\nabla v(\cdot,s)\|_{L^{4}(\Omega)}ds\\
&
+c_3\displaystyle\int^{t}_0(1+(t-s)^{-\frac34})e^{-(t-s)}
 (\|(u-1)(\cdot,s)\|_{L^2(\Omega)}
+ \|z(\cdot,s)\|_{L^2(\Omega)})ds
\end{array}
\end{equation}
with some $c_3>0$.
This  readily  establishes \eqref{5.33} with appropriate $\vartheta>0$ in view of  \eqref{5.19}, \eqref{5.13} and \eqref{3.16}.
\end{proof}

Thereby  our main result has essentially been proved already.

{\bf Proof of Theorem 1.1.}  \rm The statement on global boundedness of classical solutions
has been asserted by  Lemma \ref{lemma3.2}--Lemma \ref{lemma3.4} and Lemma \ref{lemma5.6}.  The convergence properties
in \eqref{1.8}--\eqref{1.11} are
precisely established by  Lemma \ref{lemma3.2}--Lemma \ref{lemma3.4} and Lemma \ref{lemma5.7}, respectively.

\section{Acknowledgments}
This work is partially supported by NNSFC (No.12071030).

\end{document}